\providecommand{\noopsort[1]{}}
\numberwithin{equation}{section}
\newtheorem{thm}{Theorem}[section]
\newtheorem{cor}[thm]{Corollary}
\newtheorem{prop}[thm]{Proposition}
\newtheorem{lem}[thm]{Lemma}
\theoremstyle{remark}
\newtheorem{rem}[thm]{Remark}
\newtheorem{example}[thm]{Example}
\newtheorem{hyp}[thm]{Hypothesis}
\theoremstyle{definition}
\newtheorem{defn}[thm]{Definition}
\renewcommand{\div}{\mathrm{div}\,}
\newcommand{\eps}{\varepsilon}
\newcommand{\weak}{\rightharpoonup}
\newcommand{\one}{\mathbbm{1}}
\newcommand{\bx}{\mathbf{x}}
\newcommand{\by}{\mathbf{y}}
\newcommand{\bw}{\mathbf{w}}
\newcommand{\tn}{t\wedge \tau_n}
\newcommand{\ip}[2]{\ifthenelse{\equal{#1}{}}{\mbox{$ [ \,\cdot\, , \, \cdot \, ] $}}{
\mbox{$ \left[ #1 \, , \, #2 \right]$}}}
\newcommand{\norm}[1]{\ifthenelse{\equal{#1}{}}{\mbox{$\|\cdot\|$}}{\mbox{$\| #1 \|$}}}
\newcommand{\dual}[2]{\ifthenelse{\equal{#1}{}}{\mbox{$ \langle \,\cdot\; , \; \cdot \, \rangle $}}{
\mbox{$ \langle #1   ,  #2 \rangle$}}}
\newcommand{\CR}{\mathds{R}}
\newcommand{\CN}{\mathds{N}}
\newcommand{\CQ}{\mathds{Q}}
\newcommand{\CB}{\mathds{B}}
\newcommand{\ws}{$\mathrm{weak}^*$}
\newcommand{\PP}{\mathbf{P}}
\newcommand{\QQ}{\mathbf{Q}}
\newcommand{\FF}{\mathds{F}}
\renewcommand{\P}{\mathbb{P}}
\newcommand{\bX}{\mathbf{X}}
\newcommand{\bM}{\mathbf{M}}
\newcommand{\cF}{\mathscr{F}}
\newcommand{\cL}{\mathscr{L}}
\newcommand{\cG}{\mathscr{G}}
\newcommand{\cP}{\mathscr{P}}
\newcommand{\cB}{\mathscr{B}}
\newcommand{\expect}{\mathbb{E}}
\newcommand{\half}{\frac{1}{2}}
\renewcommand{\P}{{\mathbb P}}
\newcommand{\OO}{\mathscr{O}}
\begin{document}
\title[On a class of Martingale Problems on Banach Spaces]{On a Class of Martingale Problems on Banach Spaces}
\author{Markus C.\ Kunze}
\address{Institute of Applied Analysis, University of Ulm, 89069 Ulm, Germany}
\email{markus.kunze@uni-ulm.de}
\subjclass[2010]{60H15, 60J25}
\keywords{local Martingale problem, strong Markov property, stochastic partial differential equations}
\thanks{The author was supported by VICI subsidy 639.033.604 in the `Vernieuwingsimpuls' program of the Netherlands Organization
for Scientific Research (NWO)}    

\begin{abstract}
We introduce the local martingale problem associated to semilinear stochastic evolution equations driven by a cylindrical Wiener process
and establish a one-to-one correspondence between solutions of the martingale problem and (analytically) weak solutions of the stochastic equation.
We also prove that the solutions of well-posed equations are strong Markov processes.
We apply our results to semilinear stochastic equations with additive noise where the semilinear term is merely measurable and to stochastic 
reaction-diffusion equations with H\"older continuous multiplicative noise.
\end{abstract}
\maketitle

\section{Introduction}
One of the most important tools in the study of stochastic differential equations is the theory of associated martingale problems of Stroock and 
Varadhan \cite{sv1}. At the heart of their approach is the equivalence between solutions of stochastic differential equations (i.e.\ 
stochastic processes) and solutions of the associated martingale problem (i.e.\ probability measures on a function space).

This equivalence is helpful in several ways. First, it can be used to prove \emph{existence of solutions} to stochastic 
differential equations by means of approximation and tightness arguments. Second, it plays an important role in proving 
\emph{uniqueness of solutions} using techniques from semigroup theory or partial differential equations. Last but not least, 
the approach of Stroock and Varadhan yields, given existence and uniqueness of solutions, the strong Markov property of 
the solutions. This plays an important role in the study of further properties of the solutions, e.g.\ their asymptotic behavior.\medskip

In this article, we set up a theory of (local) martingale problems for stochastic evolution equations 
\begin{equation}\label{eq.sde}
 dX(t) = \big[ AX(t) + F(X(t))\big]dt + G(X(t))dW_H(t)\,,
\end{equation}
on a separable Banach space $E$.
Here, $A$ is the generator of a strongly continuous semigroup $S$ on
$E$, $W_H$ is an $H$-cylindrical Wiener process where $H$ is a separable Hilbert space and the 
nonlinearities $F: E \to E$ and $G: E \to \cL (H,E)$ satisfy suitable measurability and (local) 
boundedness assumptions. In fact, we shall consider a slightly more general situation and allow the nonlinearities 
to take values in a larger Banach space $\tilde{E}$, resp.\ $\cL (H, \tilde{E})$.
We will make our assumptions precise in Section \ref{sect.sde}.

Martingale problems for equations of this form on 2-smoothable Banach spaces were studied by Ondrej\'at \cite{ond05}. The usual solution concept 
for equations of the form \eqref{eq.sde} is that of a mild solution which involves a stochastic convolution term. We note that to assure that this term is well-defined, one has to impose additional assumptions on the Banach space (typically geometric assumptions such as the UMD property or 2-smoothability) and/or the coefficients. This poses problems when extending the theory to general Banach spaces.  Here, we overcome these problems 
by basing our theory on (analytically) weak solutions rather than on mild solutions.

Our approach does not only allow us to consider general Banach spaces, it also allows us to work without additional technical 
assumptions (such as the J-property in \cite{ond05}) to ensure stochastic integrability of the occurring processes and to impose only 
minimal assumptions on the coefficients.

Under these minimal assumptions, we introduce the local martingale problem associated to equation \eqref{eq.sde} in Section 
\ref{sect.sde} and establish a one-to-one correspondence between solutions of the local martingale problem and solutions of the 
stochastic evolution equation in Theorem \ref{t.weakmart}. In Theorem \ref{t.wellposed} we prove, given existence and uniqueness of 
solutions, the strong Markov property for solutions of \eqref{eq.sde}, using some abstract results about local martingale problems 
presented in Section \ref{sect.martingale}.

Thus, Sections \ref{sect.martingale} -- \ref{sect.markov} contain the abstract theory of martingale problems on Banach spaces.
In Sections \ref{sect.yw} and \ref{sect.integration} we discuss related results, which we believe are helpful to apply the theory.

In Section \ref{sect.yw} we extend the Yamada-Watanabe theory \cite{yw1} to the setting of Banach spaces and prove that 
pathwise uniqueness implies uniqueness in law (this is the uniqueness concept used in the abstract theory above) and strong existence of 
solutions. As in finite dimensions, pathwise uniqueness can be much easier verified than uniqueness in law in certain situations, in particular 
for equations with (locally) Lipschitz continuous coefficients.

In Section \ref{sect.integration} we show that (analytically) weak and mild solutions coincide if either the coefficient $G$ is constant, i.e.\ in 
equations with additive noise, or if the Banach space $E$ is a UMD space. Working with mild solutions is especially helpful to prove 
existence of solutions, as the standard approach via approximation and tightness often uses the factorization method of \cite{dpkz} as a tool, which, 
in turn, requires a Banach space valued stochastic integral. Here, we use the Banach space valued Wiener integral, see \cite{vNW05}, 
in the case of constant $G$ and the theory of integration in UMD Banach spaces \cite{vNVW07} in the second case. Note that this is the only section 
where we make use of a stochastic integral, all our abstract results do not depend on geometric assumptions on $E$. 
\medskip 

Let us close this introduction by discussing applications of our theory to concrete stochastic evolution equations.  Techniques inspired by 
martingale problems can be found frequently in the literature on infinite dimensional stochastic equations even though, more often than not, 
a martingale problem is not used directly. This is most apparent in the term \emph{martingale solution} which in infinite dimensions does not refer 
to solutions of the martingale problem but is  used synonymously for stochastically weak solutions (thus for stochastic processes). Such solutions were constructed, for example, in \cite{cmg95, gg94a, bg99, zimmer}. Concerning uniqueness,  several authors \cite{cmg95, gg94, zambotti} have proved uniqueness in law for certain equations by using partial differential equations on Hilbert spaces. 

Naturally, the results contained in this article can be used to prove, given well-posedness, the strong Markov property for solutions 
of stochastic evolution equations in arbitrary separable Banach spaces. However, the results obtained here can also be used to \emph{establish}
well-posedness of a given equation. Naturally, the proof of well-posedness of a stochastic evolution equation 
requires additional arguments which depend on the equation in question. 
Thus, the full proofs of our applications to stochastic evolution equations will be given elsewhere \cite{k13, k12}. We will, however, give a rough sketch  in Section \ref{sect.applications} and discuss how the results of this article enter the arguments.

\section{Markov processes and local Martingale Problems}\label{sect.martingale}

In this section $(E, d)$ is a complete, separable metric space. We denote the Borel $\sigma$-algebra of $E$ by
$\cB (E)$. The spaces of scalar-valued measurable, bounded measurable, continuous and bounded continuous functions
will be denoted by $B(E),$ $B_b(E),$ $C(E)$ and $C_b(E)$ respectively. $\cP (E)$ denotes the set of all 
probability measures on $(E, \cB (E))$. For $x \in E$, the Dirac measure in $x$ is denoted by $\delta_x$.

By $C([0,\infty ); E)$ we denote the space of all continuous, $E$-valued functions. The elements
of $C([0,\infty ); E)$ will be denoted by bold lower case letters: $\bx, \by, \mathbf{z}$. 
Endowed with the metric $\pmb{\delta}$, defined by
\[ \pmb{\delta} (\bx , \by ) := \sum_{k=1}^\infty 2^{-k}\sup_{t \in [0,k]}d(\bx_t, \by_t)\wedge 1 , \]
$C([0,\infty ); E)$ is a complete, separable metric space in its own right. We denote its Borel $\sigma$-algebra
by $\cB$. It is well-known that $\cB = \sigma (\bx_s\, : \, s \geq 0)$, see \cite[Lemma 16.1]{kallenberg}. Here,
in slight abuse of notation, we have identified $\bx_s$ with the $E$-valued map $\bx \mapsto \bx_s$.
We shall do so in what follows without further notice.
The filtration generated by these `coordinate mappings' is denoted by $\CB := (\cB_t)_{t\geq 0}$, i.e.\ 
$\cB_t := \sigma (\bx_s\, : \, s \leq t )$. 

The space $\cP (C([0,\infty ); E))$ of probability measures on the Borel $\sigma$-algebra of $C([0, \infty); E)$ will be topologized by the 
\emph{weak topology}, i.e.\ the coarsest topology for which for all bounded continuous function $\Phi$ on $C([0,\infty ); E)$
the map $\PP \mapsto \int \Phi \, d\PP$ is continuous. It is well known that this topology
is metrizable through a complete, separable metric, see \cite[Section II.6]{partha}, i.e.\ $\cP(C([0,\infty ); E))$ 
is a Polish space.

A probability measure $\PP$ on $(C([0,\infty ); E), \cB )$ is 
called a \emph{Markov measure} if the coordinate process $(\bx_t)_{t\geq 0}$  defined on
$(C([0,\infty ); E), \cB, \PP)$ is a Markov process with respect to $\CB$, i.e.\ for all $f \in B_b(E)$ and
$s, t \geq 0$ we have
\[ \expect \big[ f(\bx_{t+s})\big| \cB_t \big] = \expect \big[ f(\bx_{t+s}) \big| \bx_t \big] \quad 
 \PP-a.e.,
\]
where $\expect$ denotes (conditional) expectation with respect to $\PP$.
If this equation also holds whenever $t$ is replaced with a $\CB$-stopping time $\tau$ which is almost surely finite,  i.e.\ 
the coordinate process is a strong Markov process with respect to $\CB$, 
then $\PP$ is called a \emph{strong Markov measure}. Here, as usual, $\cB_\tau$ is the $\sigma$-algebra
\[
\cB_\tau := \{ A \in \cB\,:\, A\cap \{ \tau \leq t \} \in \cB_t\,\, \mbox{for all}\, t \geq 0\}.
\]

A \emph{transition semigroup} is a family $\mathscr{T} := (\mathscr{T}(t))_{t \geq 0}$ of positive contractions on 
$B_b(E)$ such that
\begin{enumerate}
 \item $\mathscr{T}$ is a semigroup, i.e.\ $\mathscr{T}(0) = I$ and $\mathscr{T}(t+s) = \mathscr{T}(t)\mathscr{T}(s)$ for all $t,s \geq 0$.
 \item Every operator $\mathscr{T}(t)$ is associated with a \emph{Markovian kernel}, i.e a map 
$p_t : E\times \cB (E) \to [0,1]$ such that (i) $p_t (x, \cdot ) \in \cP(E)$ for all $x \in E$ and (ii)
$p_t (\cdot , A) \in B_b(E)$ for all $A \in \cB (E)$. That $\mathscr{T}(t)$ is associated with $p_t$ means that
$\mathscr{T}(t)f(x) = \int_E f(y) \, p_t (x, dy)$ for all $f \in B_b(E)$.
\end{enumerate}
The kernels $p_t$ themselves are referred to as \emph{transition functions} or \emph{transition probabilities}.
The semigroup property above is equivalent with the \emph{Chapman-Kolmogorov equations}.

A probability measure $\PP$ on $C([0,\infty ); E)$ is called \emph{Markov measure with transition semigroup $\mathscr{T}$}
if for all $f \in B_b(E)$ and $s,t \geq 0$ we have
\[ \expect \big[ f(\bx_{t+s}) \big| \cB_t \big] = \expect \big[ f (\bx_{t+s}) \big| \bx_t \big] 
 = \big[ \mathscr{T}(s)f\big] (\bx_t) \quad \PP-a.e.
\]
If this equation also holds whenever $t$ is replaced with an $\PP$-a.s. finite $\CB$-stopping time $\tau$, 
then $\PP$ is called a \emph{strong Markov measure with transition semigroup $\mathscr{T}$}.
\medskip

The connection between martingale problems and Markovian measures is well established, see 
\cite[Chapter 4]{ek}. However, if we want to treat stochastic evolution 
equations on Banach spaces, we have to consider \emph{local} martingale problems rather than martingale problems.

\begin{defn}
An \emph{admissible operator} is a map $\cL$, defined on a subset $D(\cL) \subset C(E)$ and taking values
in $B(E)$ such that for all $f \in D(\cL )$ the function $\cL f$ is bounded on compact subsets of $E$. 

Given an admissible operator $\cL$, a probability measure $\PP$ on $C([0,\infty );E)$ is said
to \emph{solve the local martingale problem for $\cL$} if for every $f \in D(\cL )$ the process
$\bM^f$ defined by
\[ \big[\bM^f(\bx )\big](t) := f(\bx_t) - f(\bx_0) - \int_0^t \cL f(\bx_s) \, ds \]
is a local martingale under $\PP$. This of course means that there exists a sequence $\tau_n$, which may depend on 
$f$, of $\CB$-stopping times with $\tau_n \uparrow \infty$ $\PP$-almost surely such that the stopped processes
$\bM_{\tau_n}^f$, defined by $\bM^f_{\tau_n}(t) := \bM^f (t\wedge \tau_n )$, are martingales
for all $n \in \CN$. 

If an initial distribution $\mu \in \cP (E)$ is specified, we say that $\PP$ is a 
\emph{solution to the local martingale problem for} $(\cL, \mu )$ to indicate that in addition to being
a solution to the local martingale problem for $\cL$, the measure $\PP$ satisfies $\PP (\bx_0 \in \Gamma )
= \mu (\Gamma )$ for all $\Gamma \in \cB (E)$, i.e.\ under $\PP$ the random variable $\bx_0$
has distribution $\mu$.
\end{defn} 

We note that by the continuity of $t \mapsto \bx_t$ and since $\cL f$ is bounded on compact subsets
of $E$, the process $\bM^f$ is well-defined. In fact, since $f$ is a continuous function, it follows that
$\bM^f$ is a continuous process. 

The proofs of our results  in Section \ref{sect.markov} are based on the following 
theorem. 

\begin{thm}\label{t.markov}
Let $\cL$ be admissible. Suppose that for every $\mu \in \cP (E)$ any
two solutions $\PP, \QQ$ of the local martingale problem for $(\cL, \mu )$ 
have the same one-dimensional distributions, i.e.\ for all $t \geq 0$ we have
\[ \PP (\bx_t \in \Gamma ) = \QQ (\bx_t \in \Gamma ) \quad \forall \, \Gamma \in \cB (E)\,.\] 
Then 
\begin{enumerate}
\item Every solution of the local martingale problem for $\cL$ is a strong Markov measure.
\item For every $\mu \in \cP (E)$, there is at most one solution to the local martingale 
problem for $(\cL, \mu )$.
\end{enumerate}
If in addition to the uniqueness assumption above for every $x \in E$ there exists a solution $\PP_x$ to the local martingale
problem for $(\cL, \delta_x )$ and if the map $x \mapsto \PP_x(B)$ is Borel measurable for all $B \in \cB$,
then
\begin{itemize}
 \item[(3)] For every $\mu\in \cP (E)$, there exists a solution $\PP_\mu$ of the local 
martingale problem for $(\cL, \mu )$.
 \item[(4)] Define the operator $\mathscr{T}(t)$ by $\mathscr{T}(t)f(x) := \int f(\bx_t )\, d\PP_x$ for $f \in B_b(E)$. 
Then every solution $\PP$ of the local martingale problem for $\cL$ is a strong Markov measure with transition 
semigroup $\mathscr{T} := (\mathscr{T}(t))_{t\geq 0}$.
\end{itemize}
\end{thm}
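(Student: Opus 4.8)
The plan is to follow the proof of \cite[Theorem 4.4.2]{ek} almost step by step; the only structural change needed to pass from (global) martingale problems to \emph{local} ones is to work throughout with the processes $\bM^f_{\tau_{n,f}}$ of Lemma \ref{l.independent} in place of $\bM^f$. These are bounded (by $n$) continuous martingales whose localizing sequence does not depend on the underlying measure, so every appeal in \cite{ek} to optional sampling or dominated convergence for a martingale can be made here for $\bM^f_{\tau_{n,f}}$, after which Lemma \ref{l.independent} is invoked once more to recover the local-martingale property for whatever new measure has been produced.

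The key ingredient is a \emph{restarting lemma}. Let $\PP$ solve the local martingale problem for $\cL$, let $\tau$ be a bounded $\mathbb{B}$-stopping time, and let $\omega\mapsto\PP_\omega$ be a regular version of $\PP[\,\cdot\,|\cB_\tau]$, which exists since $C([0,\infty);E)$ is Polish. Writing $\theta_\tau$ for the shift $(\theta_\tau\bx)(t):=\bx(\tau(\bx)+t)$, the claim is that for $\PP$-a.e.\ $\omega$ the image measure $\QQ_\omega:=\PP_\omega\circ\theta_\tau^{-1}$ solves the local martingale problem for $(\cL,\delta_{\bx(\tau(\omega))})$. The statement about the initial law is immediate from $\PP_\omega(A)=\one_A(\omega)$ for $A\in\cB_\tau$. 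For the martingale part I would fix $f\in D(\cL)$ and start from the identity
\[ \bM^f(\theta_\tau\bx)(t)=\big[\bM^f(\bx)(\tau+t)-\bM^f(\bx)(\tau)\big]+f(\bx(\tau)), \]
whose last term is $\PP_\omega$-a.s.\ the constant $f(\omega(\tau(\omega)))$. Since $\bM^f$ is a $\PP$-local martingale, the post-$\tau$ increment $t\mapsto\bM^f(\tau+t)-\bM^f(\tau)$ is a local martingale for the filtration $(\cB_{\tau+t})_{t\ge0}$ with localizing sequence $\rho_n:=(\tau_{n,f}-\tau)\vee0$ and uniformly bounded stopped parts; conditioning on the initial $\sigma$-algebra $\cB_\tau$ of that filtration, it remains such a (bounded, stopped) martingale under $\PP_\omega$ for $\PP$-a.e.\ $\omega$. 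Transporting this along $\theta_\tau$ — under which $\sigma(\by(u):u\le t)$ pulls back into $\cB_{\tau+t}$ — and using Lemma \ref{l.independent}, one obtains that $\bM^f$ is a $\QQ_\omega$-local martingale for $\PP$-a.e.\ $\omega$. A monotone-class argument (checking the local-martingale property only against countably many times and a countable determining family of bounded $\cB_s$-measurable functionals, and against a suitable countable subfamily of $D(\cL)$) then gives a single $\PP$-null exceptional set valid for all $f\in D(\cL)$; as in \cite{ek} I would first carry this out for dyadic-valued stopping times and then approximate a general bounded $\tau$ from above, using path-continuity.

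Granting the restarting lemma, the conclusions follow as in \cite{ek}. For (1): if $\PP$ solves the local martingale problem for $\cL$ and $\tau$ is a bounded $\mathbb{B}$-stopping time, then for $f\in B_b(E)$ and $s\ge0$ we have $\expect^\PP[f(\bx(\tau+s))\mid\cB_\tau](\omega)=\int f(\by(s))\,\QQ_\omega(d\by)$; by the lemma $\QQ_\omega$ solves $(\cL,\delta_{\bx(\tau(\omega))})$, so the one-dimensional uniqueness hypothesis forces this quantity to depend on $\omega$ only through $\bx(\tau(\omega))$, hence to agree $\PP$-a.s.\ with a $\sigma(\bx(\tau))$-measurable function, which by the tower property is exactly the strong Markov property. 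For (2): if $\PP$ and $\QQ$ both solve $(\cL,\mu)$ they share one-dimensional distributions by hypothesis, and by the lemma together with the uniqueness hypothesis the quantity $y\mapsto\int g(\by(s))\,d\QQ_y$, for any solution $\QQ_y$ of $(\cL,\delta_y)$, is well defined for $(\PP\circ\bx(t)^{-1})$-a.e.\ $y$ and governs the conditional law of $\bx(t+s)$ given $\cB_t$ under both $\PP$ and $\QQ$; an induction on the number of time points then yields equality of finite-dimensional distributions, whence $\PP=\QQ$ because $\cB=\sigma(\bx(s):s\ge0)$.

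Under the extra hypotheses, for (3) I would set $\PP_\mu(B):=\int_E\PP_x(B)\,\mu(dx)$: this is a probability measure with initial law $\mu$, and since each $\bM^f_{\tau_{n,f}}$ is a bounded $\PP_x$-martingale, integrating in $x$ makes it a $\PP_\mu$-martingale for every $n$, so $\bM^f$ is a $\PP_\mu$-local martingale and $\PP_\mu$ solves $(\cL,\mu)$. For (4): $T(t)f(x):=\int f(\bx(t))\,d\PP_x$ defines a positive contraction of $B_b(E)$ with $T(0)=I$, associated with the Markovian kernel $p_t(x,A):=\PP_x(\bx(t)\in A)$ (Borel measurability of $x\mapsto T(t)f(x)$ coming from the hypothesis and a monotone-class argument); by (1) each $\PP_x$ is strong Markov, and the restarting lemma plus uniqueness identify its transition function with $y\mapsto(T(s)g)(y)$, so integrating $\expect^{\PP_x}[g(\bx(t+s))\mid\cB_s]=(T(s)g)(\bx(s))$ gives the Chapman--Kolmogorov equations; finally the same computation for an arbitrary solution $\PP$ yields $\expect^\PP[f(\bx(\tau+s))\mid\cB_\tau]=(T(s)f)(\bx(\tau))$ for all bounded $\mathbb{B}$-stopping times $\tau$. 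The step I expect to be the main obstacle is the restarting lemma — specifically, the careful transport of the local-martingale property through the regular-conditional-probability and shift constructions, and the reduction (via discretization of $\tau$ and a monotone-class/separability argument) to one $\PP$-null set valid simultaneously for all $f\in D(\cL)$; the remainder is the bookkeeping of \cite{ek} with $\bM^f_{\tau_{n,f}}$ playing the role of $\bM^f$.
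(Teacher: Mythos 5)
Your overall strategy (regular conditional probabilities, restart at a bounded stopping time, then invoke the one-dimensional-uniqueness hypothesis) contains a genuine gap at exactly the point you flag as the main obstacle. To apply the uniqueness hypothesis you need, for $\PP$-a.e.\ $\omega$, that $\QQ_\omega:=\PP_\omega\circ\theta_\tau^{-1}$ is a solution of the local martingale problem, i.e.\ that $\bM^f$ is a $\QQ_\omega$-local martingale \emph{simultaneously for all} $f\in D(\cL)$. Your argument produces, for each fixed $f$, a $\PP$-null set outside of which the property holds (this part is fine: rational times, a countable generator of $\cB_s$, path continuity, and Lemma \ref{l.independent}), but the theorem places no countability or separability assumption whatsoever on $D(\cL)$ --- an admissible operator has an arbitrary domain $D(\cL)\subset C(E)$. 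Hence the ``suitable countable subfamily of $D(\cL)$'' you invoke to merge the null sets need not exist, and the union of uncountably many $f$-dependent null sets cannot be controlled. (A smaller, reparable slip: $(\tau_{n,f}-\tau)\vee 0$ is not the localizing sequence for $\bM^f\circ\theta_\tau$, since the shifted process starts at $f(\bx(\tau))$ rather than at $\bM^f(\tau)$; but this only requires choosing a different localizing sequence and then applying Lemma \ref{l.independent}.)

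The proof the paper points to ([Ethier--Kurtz, Theorem 4.4.2], which it says generalizes straightforwardly using Lemma \ref{l.independent}) is structured precisely to avoid this issue: no regular conditional probabilities are used. For a bounded stopping time $\tau$ and $F\in\cB_\tau$ with $\PP(F)>0$ one forms the two measures $\PP_1(B)=\PP(\theta_\tau^{-1}B\cap F)/\PP(F)$ and $\PP_2(B)=\expect\big[\one_F\,\expect[\one_B\circ\theta_\tau\mid\bx(\tau)]\big]/\PP(F)$ (and, for part (4), the analogue with $\PP_{\bx(\tau)}(B)$ in place of the inner conditional expectation, using the assumed measurability of $x\mapsto\PP_x(B)$); one checks that each of $\PP_1,\PP_2$ solves the local martingale problem with the same initial law --- this is where the uniform, measure-independent localizing sequence of Lemma \ref{l.independent} and optional sampling for the bounded stopped martingales $\bM^f_{\tau_{n,f}}$ enter, and it is done \emph{one $f$ at a time}, so no exceptional-set bookkeeping over $D(\cL)$ arises. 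The one-dimensional-uniqueness hypothesis applied to $\PP_1,\PP_2$ then yields the (strong) Markov property, and uniqueness (2) follows by the usual induction on time points with the analogous reweighted measures. Note that the paper does use your regular-conditional-probability strategy later, in the proof of Theorem \ref{t.wellposed}, but only for the specific operator $\cL_{[A,F,G]}$, and it first proves Lemma \ref{l.count} to manufacture exactly the countable family $\mathscr{D}_0$ that your argument would need; for the abstract Theorem \ref{t.markov} no such family is available, so your route does not prove the statement as given.
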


\begin{proof}
This  Theorem is a generalization of \cite[Theorem 4.4.2]{ek} to local martingale problems. Hence, we have the added difficulty that 
in the definition of ``solution of the local martingale problem'' a sequence of stopping times appears. We only give the proof of 
statement (1), the other statements are derived following the proofs of the corresponding statements in \cite[Theorem 4.4.2]{ek}  with similar
changes due to the presence of stopping times.\smallskip

Let $\PP$ be a solution of the local martingale problem for $(\cL, \mu )$. We denote (conditional)
expectation with respect to $\PP$ by $\expect$. Let $\rho$ be a stopping time with $\rho < \infty$ almost surely and define the mappings
$\Theta_\rho$ and $\Psi_\rho : C([0,\infty ); E) \to C([0,\infty ); E)$ by
\[ (\Theta_\rho\bx )(t) := \bx (t+\rho (\bx)) \quad \mbox{and} \quad (\Psi_\rho \bx) (t) := 
 \bx ( ( t -\rho (\bx))^+ ) \,.
\]
Then $\Theta_\rho$ and $\Psi_\rho$ are measurable mappings with $\Psi_\rho \Theta_\rho\bx = \bx$ for
all $\bx \in C([0,\infty ); E)$.

Now fix $A \in \cB_\rho$ with $\PP(A) >0$ and define the measures $\PP_1, \PP_2$ on $C([0,\infty ); E)$ by
\[
\PP_1 (B ) :=  \frac{ \expect \big[ \one_A \expect [ \one_{\Theta_\rho^{-1}B}| \cB_\rho]\big]}{\PP(A)}
\quad\mbox{and}\quad 
\PP_2 (B ) :=  \frac{ \expect \big[ \one_A \expect [ \one_{\Theta_\rho^{-1}B}| \bx (\rho) ]\big]}{\PP(A)} \,.
\]
We note that under $\PP_1$ and $\PP_2$ the distribution of $\bx (0)$ are identical, namely
for $\Gamma \in \cB (E)$ we have 
\[ \PP_1(\bx (0) \in \Gamma ) = \PP_2(\bx (0) \in \Gamma ) = \PP (\bx (\rho) \in \Gamma | A )\,. \]
Hence, if we prove that $\PP_1$ and $\PP_2$ solve the local martingale problem associated with
$\cL$, we can conclude from our assumption that $\PP_1$ and $\PP_2$ have the same one-dimensional distributions.
This will then imply that for $t >0$ and $\Gamma \in \cB (E)$, we have
\begin{eqnarray*}
&&  \PP_1 (\bx (t) \in \Gamma ) =  \PP (A)^{-1} \expect \big[ \one_A \expect [ \bx (t+\rho ) \in \Gamma |
\cB_\rho ]\big]\\
& = & \PP_2 (\bx (t) \in \Gamma )  =   \PP (A)^{-1} \expect \big[ \one_A \expect [ \bx (t+\rho ) \in \Gamma |
\bx (\rho ) ]\big] \,.
\end{eqnarray*}
Multiplying with $\PP (A)$ and observing that $A$ with $\PP (A)>0$ was arbitrary, it follows that
$\expect [ \bx (t+\rho ) \in \Gamma | \cB_\rho ] = \expect [ \bx (t+\rho ) \in \Gamma | \bx (\rho) ]$.
Since $t, \rho$ and $\Gamma$ were arbitrary, this proves that $(\bx(t))_{t\geq 0}$ is a strong Markov process under
$\PP$.\smallskip

It remains to prove that $\PP_1$ and $\PP_2$ solve the local martingale problem associated with
$\cL$. Fix $f \in D(\cL)$. Since $\PP$ solves the local martingale problem, there exists a sequence $\tau_n$ of stopping times with
$\tau_n \to \infty$ almost everywhere with respect to $\PP$ such that $\bM^f_{\tau_n}$ is a
martingale under $\PP$. We put $\sigma_n := \tau_n\circ \Psi_\rho$. Note that
$\{ \sigma_n \leq t \} = \Psi_\rho^{-1}\{\tau_n \leq t \} \in \cB_t$, since $\tau_n$ is a stopping time
and since $\Psi_\rho^{-1}A \in \cB_t$ for all $A \in \cB_t$, as is easy to see. Hence $\sigma_n$ is a stopping 
time. Since $\Psi_\rho \Theta_\rho\bx = \bx$, it follows from the definition of $\PP_1$ and $\PP_2$
that $\sigma_n \uparrow \infty$ almost surely with respect to $\PP_1$ and $\PP_2$. 

Now fix $t>s$ and $C \in \cB_s$ and observe that
\[
\xi (\bx) := \Big[ \big(\bM^f_{\sigma_n}(t) - \bM^f_{\sigma_n}(s)\big)\one_C\big](\Theta_\rho \bx )
=  \Big[ \big(\bM^f_{\tau_n}(t+\rho) - \bM^f_{\tau_n}(s+\rho)\big)\one_{\Theta_\rho^{-1} C}\big](\bx )
\]
where $\Theta_\rho^{-1}C \in \cB_{s+\rho}$. Since $\bM^f_{\tau_n}$ is a continuous $\PP$-martingale, 
it follows from the optional sampling theorem that
$\expect [\xi | \cB_\rho ] = 0$, and hence, since $\sigma (\bx (\rho)) \subset \cB_\rho$, also 
$\expect [\xi | \bx (\rho)] = 0$. Recalling the definition of $\PP_1$ and $\PP_2$, we see that
that $\bM^f_{\sigma_n}$ is a martingale under $\PP_1$ and $\PP_2$. 
\end{proof}

\begin{defn}
Let $\cL$ be an admissible operator. We say that the local martingale problem for $\cL$
is \emph{well-posed} if for every $x \in E$, there exists a unique solution $\PP_x$
of the local martingale problem for $(\cL, \delta_x)$. 

We say that the martingale problem for $\cL$ is \emph{completely well-posed}, if (i) for every $\mu \in \cP (E)$
there exists a unique solution $\PP_\mu$ of the local martingale problem for $(\cL, \mu )$ and (ii)
the map $x \mapsto \PP_x(B)$ is measurable for every $B \in \cB$.  

In the case of uniqueness, we will use the notation $\PP_x$ resp.\ $\PP_\mu$ for the solution of
the local martingale problem for $(\cL, \delta_x)$, resp.\ $(\cL, \mu )$.
\end{defn}

In Theorem \ref{t.wellposed}, we will prove that if the martingale problem for $\cL$ is well-posed, then it is already 
completely well-posed. Thus, we obtain the measurability of the map $x \mapsto \PP_x$ and existence and uniqueness of 
solutions for arbitrary initial distributions $\mu$ for free.

We note that by (2) of Theorem \ref{t.markov}, the uniqueness assumption in the definition 
of `completely well-posed' can be weakened to uniqueness of the one-dimensional marginals. Similarly, 
by (3) of Theorem \ref{t.markov}, in the definition of `completely well-posed' it suffices to assume 
existence of solutions only for degenerate initial distributions $\delta_x$, for all $x \in E$.

By part (4) of Theorem \ref{t.markov}, if the local martingale problem for $\cL$ is completely well-posed, then there exists a 
transition semigroup $\mathscr{T}$ such that every solution $\PP_\mu$ is a strong Markov measure with transition semigroup 
$\mathscr{T}$. This semigroup $\mathscr{T}$ is uniquely determined by $\cL$ and will be called the \emph{associated semigroup}.

\section{Stochastic differential equations and the associated local martingale problem}\label{sect.sde}

We now turn our attention to the stochastic evolution equation \eqref{eq.sde}. In order to stress the dependence 
on the coefficients, we will also refer to equation \eqref{eq.sde} as equation $[A,F,G]$.
The following are our standing hypotheses on the coefficients and will be assumed in the rest of this paper.

\begin{hyp}\label{hyp1}
$\tilde{E}$ is a separable Banach space and $A$ generates a strongly continuous semigroup $S := (S(t))_{t\geq 0}=
(S_t)_{t\geq 0}$ on $\tilde{E}$.
$H$ is a separable Hilbert space and $W_H$ is an $H$-cylindrical Wiener process. 
$E$ is a separable Banach space such that $D(A)\subset E\subset \tilde{E}$ 
with continuous and dense embeddings. Throughout, all Banach spaces are real. Furthermore,
\begin{enumerate}
 \item $F: E \to \tilde{E}$ is strongly measurable and bounded on bounded subsets of $E$;
 \item $G: E \to \cL (H,\tilde{E})$ is $H$-strongly measurable, i.e.\ $Gh : E \to \tilde{E}$ is strongly measurable for 
all $h \in H$, and $G$ is bounded on bounded subsets of $E$.
\end{enumerate}
\end{hyp}

\begin{example}
Let us describe typical examples in which Hypothesis \ref{hyp1} is satisfied.

In the easiest example, $\tilde{E} = E$ and $A$ is the generator of a strongly continuous $S$ on $\tilde{E}$. 
In applications, $A$ is typically a differential operator and $\tilde{E}$ is an $L^p$-space.
In that situation, it is also possible to replace $E$ with a suitable Sobolev space or a space of continuous functions.
To model equations driven by (additive or multiplicative) white noise, it is often useful to replace $\tilde{E}$ with 
a suitable extrapolation space, see, for example, \cite{vNVW08}.

In these situations, the semigroup $S$ typically maps $\tilde{E}$ into $E$ and restricts to a strongly continuous 
semigroup on $E$. Moreover, one has some control over the norms $\|S(t)\|_{\cL (\tilde{E}, E)}$ at $t=0$. It should be noted,
that we assume none of this in Hypothesis \ref{hyp1}. However, later on (in Hypothesis \ref{hyp2}) we will make precisely 
these assumptions. 
\end{example}

Before defining what we mean by `a solution' of equation $[A,F,G]$,
let us recall the notion of an $H$-cylindrical Wiener process. Let $(\Omega, \Sigma, \FF, \P)$ be a stochastic basis, i.e.\ a 
probability space $(\Omega, \Sigma, \P)$ together with a filtration $\FF = (\cF_t)_{t\geq 0}$. We say that the \emph{usual conditions}
are satisfied if $\cF_0$ contains all $\P$-null sets and the filtration is right continuous.

An \emph{$H$-cylindrical Wiener process (with respect to $\FF$)}
 is a bounded linear operator $W_H$ from $L^2(0,\infty; H)$ to $L^2(\Omega, \Sigma , \P)$ with the following properties:
\begin{enumerate}
 \item for all $f \in L^2(0,\infty; H)$ the random variable $W_H(f)$ is centered Gaussian.
 \item for all $t\geq 0$ and $f \in L^2(0,\infty; H)$ with support in $[0,t]$, the random variable $W_H(f)$ is $\cF_t$-measurable.
 \item for all $t\geq 0$ and $f \in L^2(0,\infty; H)$ with support in $[t, \infty)$, the random variable $W_H(f)$ is independent of $\cF_t$.
 \item for all $f_1,f_2\in L^2(0,\infty; H)$ we have $\expect (W_H(f_1)W_H(f_2)) = [f_1,f_2]_{L^2(0,\infty; H)}$.
\end{enumerate}

We shall write 
\[ W_H(t)h := W_H(\one_{(0,t]}\otimes h), \quad t>0, h \in H.\]
It is easy to see that for $h \in H$ the process $W_Hh := (W_H(t)h)_{t \geq 0}$ is a real-valued Brownian motion 
(which is standard if $\|h\|_H = 1$).

We now define the concept of a weak solution. The relation of weak solution with other solution concepts 
will be discussed in Section \ref{sect.integration}.

\begin{defn}
A tuple $\big( (\Omega, \Sigma , \FF, \P),  W_H, \bX\big)$, where $(\Omega, \Sigma , \FF, \P)$ is stochastic basis satisfying the usual conditions, $W_H$ is an H-cylindrical Wiener process with respect to $\FF$ and
$\bX = (X_t)_{t\geq 0}$ is a continuous, $\FF$-progressive, $E$-valued process is called \emph{weak solution} of \eqref{eq.sde} if
for all $x^* \in D(A^*) \subset\tilde{E}^*$ and $t \geq 0$ we have
\begin{equation}\label{eq.sol}
  \dual{X_t}{x^*} = \dual{X_0}{x^*} + \int_0^t \dual{X_s}{A^*x^*}\, ds 
 + \int_0^t \dual{F(X_s)}{x^*}\, ds + \int_0^t G(X_s)^*x^*\, dW_H(s)\,,
\end{equation}
$\P$-a.e.
\end{defn}

\begin{rem}
Weak solutions are weak both in the analytic sense, i.e.\ we require \eqref{eq.sol} to hold only if tested 
against functionals $x^* \in D(A^*)$ and in the probabilistic sense, i.e.\ the stochastic basis and the cylindrical Wiener process are part of the solution. More appropriately, we should speak of `analytically weak and stochastically weak solution' or `weak martingale solution'.
However, to shorten notation, we have settled on the term `weak solution'.
\end{rem}

By the continuity of the paths and our assumptions in Hypothesis \ref{hyp1}, the Lebesgue-integral
in \eqref{eq.sol} is well defined. The stochastic integral in
equation \eqref{eq.sol} is an integral of an $H \simeq H^*$-valued stochastic
processes with respect to a cylindrical Wiener process. It is well known how to construct such an
integral for progressive $H$-valued processes $\Phi$ such that $\Phi \in L^2(0,T;H)$ almost surely for all $T >0$. 
Namely, if $(h_k)$ is a (finite or countably infinite) orthonormal basis of the separable Hilbert space $H$ and we define
$\beta_k(s) := W_H(s)h_k$, then 
\[
\int_0^t \Phi(s) \, dW_H(s) := \sum_k \int_0^t \ip{\Phi(s)}{h_k}_H\, d\beta_k(s)\,.
\]
The integral process $\mathbf{I}(t) := \int_0^t \Phi (s)dW_H(s)$ is a real-valued, continuous, local martingale with 
with quadratic variation $\llbracket \mathbf{I} \rrbracket_t = \int_0^t \norm{\Phi (s)}^2_H\, ds$. We also note that
for an $\FF$-stopping time $\tau$ we have almost surely
$\mathbf{I}(t\wedge \tau ) = \int_0^t \one_{[0,\tau]}(s)\Phi (s) \, dW_H(s)$ for all $t \geq 0$.

In order to shorten notation, we will say that a process $\bX$ is a weak solution of \eqref{eq.sde}, meaning
that $\bX$ is a continuous, progressive, $E$-valued process, defined on a stochastic basis $(\Omega, \Sigma, \P, \FF)$, satisfying the usual 
conditions, on 
which an $H$-cylindrical Wiener process $W_H$ with respect to $\FF$ is defined such that the tupel
$((\Omega, \Sigma, \FF, \P), W_H, \bX)$ is a weak solution of \eqref{eq.sde}. In this case, unless stated otherwise,
$\P$ will denote the measure on the probability space and $W_H$ the $H$-cylindrical Wiener process.
These remarks apply, mutatis mutandis, also for the other solution concepts that we will introduce.

\begin{rem}\label{rem1}
We note that the exceptional set in \eqref{eq.sol} which initially depends on $x^*$ and $t$ may be chosen independently
of $t$, since the deterministic integrals as well as the stochastic integral in \eqref{eq.sol} are pathwise continuous
in $t$.
\end{rem}

We now establish a one-to-one correspondence between weak solutions of equation $[A,F,G]$ and solutions 
of the local martingale problem for an (admissible) operator $\cL_{[A,F,G]}$ which we call the \emph{associated local
martingale problem}.

The operator $\cL_{[A,F,G]}$ is defined as follows.

By $\mathscr{D}$ we denote the vector space of all functions $f: E \to \CR$ of the form
\[ f(x) = \varphi ( \dual{x}{x_1^*}, \ldots, \dual{x}{x_n^*}) \]
where $n \in \CN$, $\varphi \in C^2(\CR^n)$ and $x_1^*, \ldots, x_n^* \in D(A^*)$.

For $f = \varphi ( \dual{\cdot}{x_1^*}, \ldots, \dual{\cdot}{x_n^*}) \in \mathscr{D}$ we put
\begin{equation}\label{eq.l}
\begin{aligned}
L_{[A,F,G]} f (x) := & \sum_{k=1}^n \frac{\partial\varphi}{\partial u_k}(\dual{x}{x_1^*},\ldots, 
 \dual{x}{x_n^*})\cdot \big[ \dual{x}{A^*x_k^*} + \dual{F(x)}{x_k^*}\big]\\
& + \half \sum_{k,l=1}^n [G(x)^*x_k^*\,,\,G(x)^*x_l^*]_H \frac{\partial^2\varphi}{\partial u_k\partial u_l}
(\dual{x}{x^*_1}, \ldots, \dual{x}{x_n^*})
\end{aligned}
\end{equation}

The operator $\cL_{[A,F,G]}$ is defined by $D(\cL) = \mathscr{D}$ and $\cL_{[A,F,G]} f := L_{[A,F,G]}f$. Put 
$\mathscr{D}_{\min}  := \big\{ \dual{\cdot}{x^*}^j\,: \, x^*\in D(A^*), j=1,2\big\}$. We will also use 
the operator $\cL_{[A,F,G]}^{\min} := \cL_{[A,F,G]}|_{\mathscr{D}_{\min}}$.
We note that since $F$ and $G$ are bounded on bounded subsets of $E$, 
the operators $\cL_{[A,F,G]}$ and $\cL_{[A,F,G]}^{\min}$ are admissible. We would like to point out that the function $\cL_{[A, F, G]}f$ even if $\varphi$
has compact support. This is the reason for considering local martingale problems, rather than martingale problems.

\begin{thm}\label{t.weakmart}
Suppose that $\bX$ is a weak solution of equation $[A,F,G]$. Then the law $\PP$ of $\bX$ solves the local martingale 
problem for $\cL_{[A,F,G]}$. 

Conversely, if $\PP$ solves the local martingale problem for $\cL_{[A,F,G]}^\mathrm{min}$, 
then there exists a weak solution $\bX$ of equation $[A,F,G]$ with distribution $\PP$. 
\end{thm}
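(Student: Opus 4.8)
The plan is to prove the two implications separately; the first is a direct application of the multidimensional It\^o formula, while the converse additionally requires a martingale representation argument.

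\emph{A weak solution yields a solution of the local martingale problem.} Let $\bX$ be a weak solution on $\big((\Omega,\Sigma,\P),\FF,W_H\big)$ and fix $f = \varphi(\dual{\cdot}{x_1^*},\ldots,\dual{\cdot}{x_n^*}) \in \mathscr{D}$. By \eqref{eq.sol} and Remark \ref{rem1}, for each $k$ the process $Y_k := \dual{X(\cdot)}{x_k^*}$ is a continuous real semimartingale whose finite-variation part is $\int_0^\cdot[\dual{X(s)}{A^*x_k^*} + \dual{F(X(s))}{x_k^*}]\,ds$ and whose continuous local martingale part is $M_k := \int_0^\cdot G(X(s))^*x_k^*\,dW_H(s)$; by the properties of the stochastic integral recalled in Section \ref{sect.sde} (and polarization) one has $\llbracket M_k, M_l\rrbracket_t = \int_0^t [G(X(s))^*x_k^*\,,\,G(X(s))^*x_l^*]_H\,ds$. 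Applying It\^o's formula to $\varphi \in C^2(\CR^n)$ and $(Y_1,\ldots,Y_n)$ and comparing the drift with \eqref{eq.l}, one obtains
\[
f(X(t)) = f(X(0)) + \int_0^t \cL_{[A,F,G]}f(X(s))\,ds + \sum_{k=1}^n \int_0^t \frac{\partial\varphi}{\partial u_k}(Y_1(s),\ldots,Y_n(s))\,dM_k(s),
\]
so $\bM^f(\bX)$ is a continuous $\FF$-local martingale. Since $\bM^f$ is a functional of the path, $\bM^f(\bX)$ is adapted to $\mathcal{G}_t := \sigma(X(s):s\le t)\subseteq\cF_t$. By Lemma \ref{l.independent} it suffices to show that each $\bM^f_{\tau_{n,f}}$ is a $\mathbb{B}$-martingale under the law $\PP$ of $\bX$; but $\bM^f(\bX)_{\tau_{n,f}}$ is a bounded $\FF$-martingale (again by Lemma \ref{l.independent}, whose proof applies on any filtered probability space), hence a $(\mathcal{G}_t)$-martingale, and transporting this to the canonical space via $\bX$ yields the claim. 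Thus $\PP$ solves the local martingale problem for $\cL_{[A,F,G]}$.

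\emph{A solution of the local martingale problem for $\cL_{[A,F,G]}^{\min}$ yields a weak solution.} Let $\PP$ solve the local martingale problem for $\cL_{[A,F,G]}^{\min}$, realized on $C([0,\infty);E)$ with coordinate process $\bx$ and the $\PP$-augmented filtration generated by $\mathbb{B}$. Testing against $\dual{\cdot}{x^*}\in\mathscr{D}_{\min}$, for which the second-order term in \eqref{eq.l} is absent, shows that
\[
N_{x^*}(t) := \dual{\bx(t)}{x^*} - \int_0^t\big[\dual{\bx(s)}{A^*x^*} + \dual{F(\bx(s))}{x^*}\big]\,ds
\]
is a continuous local martingale under $\PP$. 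Testing against $\dual{\cdot}{x^*}^2\in\mathscr{D}_{\min}$ and subtracting the local martingale obtained by applying It\^o's formula to $N_{x^*}^2$, one is left with a continuous local martingale of finite variation starting at $0$, which therefore vanishes; this identifies $\llbracket N_{x^*}\rrbracket_t = \int_0^t\norm{G(\bx(s))^*x^*}_H^2\,ds$, and polarizing over $x^*,y^*,x^*+y^*\in D(A^*)$ gives $\llbracket N_{x^*},N_{y^*}\rrbracket_t = \int_0^t[G(\bx(s))^*x^*\,,\,G(\bx(s))^*y^*]_H\,ds$. Thus $\{N_{x^*}:x^*\in D(A^*)\}$ is a continuous cylindrical local martingale whose quadratic covariation is absolutely continuous in time with density prescribed by $G$.

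It remains to realize this cylindrical martingale as a stochastic integral against a cylindrical Wiener process, and this I expect to be the only genuinely non-routine step. By a standard martingale representation argument, carried out on a filtered probability space possibly enlarged by an independent $H$-cylindrical Wiener process (the enlargement compensating for the possible non-injectivity of $G(\bx(s))$), one obtains an $H$-cylindrical Wiener process $W_H$ with respect to the enlarged filtration such that
\[
N_{x^*}(t) = \int_0^t G(\bx(s))^*x^*\,dW_H(s)\qquad\text{for all }x^*\in D(A^*),
\]
the right-hand side being the elementary cylindrical integral of Section \ref{sect.sde}; that the constructed process is cylindrical Wiener is checked via L\'evy's characterization, and the representation itself is verified by a quadratic-variation computation, after reducing to a countable separating family of functionals when constructing $W_H$. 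Inserting this representation into the definition of $N_{x^*}$ reproduces \eqref{eq.sol} with $X = \bx$, and since the enlargement leaves the law of the coordinate process unchanged, $\bX := \bx$ is a weak solution of $[A,F,G]$ with distribution $\PP$. The technical points in this last step --- the degeneracy of $G$, the progressive measurability of $s\mapsto G(\bx(s))^*x^*$ and $s\mapsto F(\bx(s))$, and the filtration and enlargement bookkeeping --- are where the real work lies; everything else reduces to It\^o's formula and Lemma \ref{l.independent}.
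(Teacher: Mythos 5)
Your proposal is correct and follows essentially the same route as the paper: the forward direction is the same It\^o-formula computation with the covariation $\llbracket M_k,M_l\rrbracket_t=\int_0^t[G(X(s))^*x_k^*,G(X(s))^*x_l^*]_H\,ds$, transferred to the canonical space, and the converse extracts the drift and quadratic variation from the test functions $\dual{\cdot}{x^*}$ and $\dual{\cdot}{x^*}^2$ and then represents the resulting cylindrical continuous local martingale as $\int_0^t G(\bx(s))^*x^*\,dW_H(s)$ on an enlarged stochastic basis. The only difference is that the ``standard martingale representation argument'' you invoke (and correctly flag as the non-routine step, including the enlargement needed for degenerate $G$) is exactly what the paper delegates to Ondrej\'at's results, namely \cite[Lemma 34]{ond05} for the identification of the quadratic variation and \cite[Theorem 3.1]{ondr1} for the integral representation in an arbitrary separable Banach space.
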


\begin{proof}
First suppose that $\bX$ is a weak solution of equation $[A,F,G]$.

Let $f =  \varphi ( \dual{\cdot}{x_1^*}, \ldots, \dual{\cdot}{x_n^*}) \in \mathscr{D}$ and define the
$\CR^n$-valued process $\xi$ by $\xi_k(t) = \dual{X(t)}{x_k^*}$ for all $t\geq 0$ and $k=1,\ldots , n$.
We also define $\CR^n$-valued processes $V$ and $M$ by
\[ V_k(t) := \int_0^t \dual{X_s}{A^*x^*_k} + \dual{F(X_s)}{x^*_k}\, ds
 \;\; , \;\; M_k(t) := \int_0^t G(X_s)^*x^*_k\, dW_H(s) ,
\]
for $k=1, \ldots , n$. Note that, almost surely, $V$ has continuous trajectories of locally bounded
variation and that $M$ is a continuous, local martingale. Since $X$ is a weak solution, it follows that 
$\xi = \xi_0 + M +V$.

 It\^o's formula
\cite[Theorem 5.2.9]{ek} yields
\[
 \begin{aligned}
& \quad f(X_t)  - f(X_0)   =  \varphi (\xi_t) - \varphi (\xi_0) \\
 &=   \sum_{k=1}^n \int_0^t \frac{\partial\varphi}{\partial u_k}(\xi_s )\, dV_k(s)
 + \half \sum_{k,l=1}^n \int_0^t \frac{\partial^2\varphi}{\partial u_k\partial u_l}(\xi_s )\, d \llbracket M_k, 
M_l\rrbracket_s\\ & \quad\quad\quad 
+ \sum_{k=1}^n \int_0^t \frac{\partial\varphi}{\partial u_k}(\xi_s )\, dM_k(s)
\\
&=  \int_0^t \big[L_{[A,F,G]} f\big] (X_s)\, ds + \sum_{k=1}^n \int_0^t \frac{\partial\varphi}{\partial u_k}(\xi_s)\, dM_k(s)\, \,,
 \end{aligned}
\]
for all $t \geq 0$.
Here, we have used that $\llbracket M_k, M_l\rrbracket_t = \int_0^t [G(X_s)^*x_k^*,G(X_s)^*x_l^*]_H\, ds$.
It thus follows that
\[ f(X_t) - f(X_0)  - \int_0^t [ L_{[A,F,G]} f ](X_s)\, ds \]
is a continuous local martingale with respect to $\FF$. Passing to the range space $C([0,\infty ); E)$, it
follows that under the distribution $\PP$ of $\bX$, the process $\bM^f$ is a continuous local martingale
with respect to $\CB$.\medskip

We now prove the converse. First note that if $x^* \in D(A^*)$, then for $f_1(x) = \dual{x}{x^*}$ we have
$L_{[A,F,G]} f_1 (x) = \dual{x}{A^*x^*} + \dual{F(x)}{x^*}$ and for $f_2(x) = \dual{x}{x^*}^2$
we have $L_{[A,F,G]} f_2(x) = 2\dual{x}{x^*}\cdot \big[\dual{x}{A^*x^*} + \dual{F(x)}{x^*}\big]
+ \norm{G(x)^*x^*}_H^2$. If $\PP$ is a solution of the local martingale problem for $\cL_{[A,F,G]}$, then under $\PP$
the processes $\bM^{f_1}$ and $\bM^{f_2}$ are
local martingales with respect to the canonical filtration $\CB$. Using that the coefficients $F$ and $G$ are bounded 
on bounded subsets, an approximation argument shows that we can use $\tau_n := \inf\{ t> 0: \| \bx (t) \| \geq n \}$ as 
localizing sequence for both $\bM^{f_1}$ and $\bM^{f_2}$. As in \cite[Chapter 5, Problem 4.13]{ks} we see that 
the stopped processes $\bM^{f_1}_{\tau_n}$ and $\bM^{f_2}_{\tau_n}$ are martingales with respect to filtration 
$\FF := (\cF_t)$, where $\cF_t$ is the augmentation of $\cB_{t+}$ by the $\PP$ null sets. 
Hence $\bM^{f_1}$ and $\bM^{f_2}$ are local martingales with respect 
to the filtration $\FF$, which satisfies the usual conditions.
It now follows from \cite[Lemma 34]{ond05} that under $\PP$ the process
\[ \dual{\bx_t}{x^*} - \dual{\bx_0}{x^*} - \int_0^t \dual{\bx_s}{A^*x^*} +
 \dual{F(\bx_s)}{x^*}\, ds
\]
is a continuous local martingale with quadratic variation $\int_0^t \norm{G(\bx_s)^*x^*}_H^2\, ds$.
By \cite[Theorem 3.1]{ondr1}, we find an extension $(\Omega, \Sigma, \tilde{\FF}, \P)$ of
$(C([0,\infty );E), \mathscr{B}, \FF, \PP)$ on which a cylindrical Brownian motion $W_H$
is defined such that for all $x^* \in D(A^*)$ we have 
\[ \dual{\bx_t}{x^*} - \dual{\bx_0}{x^*} - \int_0^t \dual{\bx_s}{A^*x^*} +
 \dual{F(\bx_s)}{x^*}\, ds = \int_0^t G(\bx_s)^*x^* dW_H(s)
\]
$\PP$-almost everywhere for all $t \geq 0$
This proves that $\bx$, defined on this extension, is a weak solution of $[A,F,G]$.
\end{proof}

\begin{cor}\label{c.llmin}
 A measure $\PP \in \cP (C([0,\infty ); E)$ solves the local martingale problem for $\cL_{[A,F,G]}$ if and 
only if it solves the local martingale problem for $\cL_{[A,F,G]}^{\min}$.
\end{cor}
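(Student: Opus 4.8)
The plan is to deduce this corollary directly from Theorem \ref{t.weakmart}, using it as a bridge between the two local martingale problems. The key observation is that $\mathscr{D}_{\min} \subset \mathscr{D}$, so that $\cL_{[A,F,G]}^{\min}$ is a restriction of $\cL_{[A,F,G]}$; hence the implication ``$\PP$ solves the local martingale problem for $\cL_{[A,F,G]}$'' $\Rightarrow$ ``$\PP$ solves the local martingale problem for $\cL_{[A,F,G]}^{\min}$'' is immediate, since every $f \in \mathscr{D}_{\min}$ is also in $\mathscr{D}$ and the process $\bM^f$ is by definition the same in both cases. So only the reverse implication requires an argument.

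For the reverse implication, suppose $\PP$ solves the local martingale problem for $\cL_{[A,F,G]}^{\min}$. By the converse part of Theorem \ref{t.weakmart}, there exists a weak solution $\bX$ of equation $[A,F,G]$ (defined on a suitable extension of $(C([0,\infty);E),\cB,\mathbb{B},\PP)$) whose distribution is $\PP$. Now apply the first part of Theorem \ref{t.weakmart} to this weak solution $\bX$: its law, which is $\PP$, solves the local martingale problem for $\cL_{[A,F,G]}$. This closes the loop. The proof is then essentially one line in each direction once Theorem \ref{t.weakmart} is invoked.

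I do not anticipate a genuine obstacle here; the only point deserving a moment's care is the bookkeeping about the probability space. Theorem \ref{t.weakmart} produces a weak solution on an \emph{extension} of the original space, but since the conclusion of the first part of Theorem \ref{t.weakmart} concerns only the \emph{law} $\PP$ of $\bX$ on $C([0,\infty);E)$, and since solving a local martingale problem is a statement purely about the measure $\PP$ on the path space (the localizing stopping times $\tau_{n,f}$ from Lemma \ref{l.independent} being defined on the path space itself and independent of $\PP$), the passage through the extension is harmless. One should also note that $\PP$ is indeed a probability measure on $C([0,\infty);E)$ throughout, since weak solutions have continuous paths by definition, so no measurability or path-regularity issue arises.

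\begin{proof}
Since $\mathscr{D}_{\min} \subset \mathscr{D}$ and $\cL_{[A,F,G]}^{\min}$ is the restriction of $\cL_{[A,F,G]}$ to $\mathscr{D}_{\min}$, every solution of the local martingale problem for $\cL_{[A,F,G]}$ is also a solution of the local martingale problem for $\cL_{[A,F,G]}^{\min}$; indeed, for $f \in \mathscr{D}_{\min}$ the process $\bM^f$ is literally the same in both problems.

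Conversely, suppose $\PP$ solves the local martingale problem for $\cL_{[A,F,G]}^{\min}$. By the second part of Theorem \ref{t.weakmart}, there exists a weak solution $\bX$ of equation $[A,F,G]$ with distribution $\PP$. By the first part of Theorem \ref{t.weakmart}, the law of $\bX$, which is $\PP$, solves the local martingale problem for $\cL_{[A,F,G]}$.
\end{proof}
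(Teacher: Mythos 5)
Your proof is correct and is precisely the argument the paper intends: the forward direction is trivial because $\cL_{[A,F,G]}^{\min}$ is a restriction of $\cL_{[A,F,G]}$, and the reverse direction composes the two halves of Theorem \ref{t.weakmart}, with the extension of the probability space being harmless since only the law on path space matters. The paper prints no proof, treating the corollary as immediate in exactly this way.
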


Motivated by Theorem \ref{t.weakmart} we will say that the local martingale problem for $\cL_{[A,F,G]}$ 
is the local martingale problem \emph{associated with} equation $[A,F,G]$. We will say that equation 
$[A,F,G]$ is \emph{(completely) well-posed} if the associated local martingale problem is (completely) well-posed.

\section{Well-posed equations and the strong Markov property}\label{sect.markov}

In this section we prove that if equation $[A,F,G]$ is well-posed, then it is completely well-posed. The 
results of Section \ref{sect.martingale} then imply that  
solution of $[A,F,G]$ is a strong Markov process with transition semigroup $\mathscr{T} := (\mathscr{T}(t))_{t\geq 0}$, 
where $\mathscr{T}(t)f(x) = \int_E f(\bx_t)\, d\PP_x$. 

The key step in the proof is is to show that it even suffices 
to consider the local martingale problem for an operator $\cL_{[A,F,G]}^0$, defined on a countable set, cf.\ \cite[Theorem 4.4.6]{ek}.

\begin{lem}\label{l.count}
There exists a countable subset $\mathscr{D}_0$ of $\mathscr{D}$ such that
a measure $\PP$ solves the local martingale problem associated for $\cL_{[A,F,G]}$ if and only if it solves the martingale 
problem associated with $\cL_{[A,F,G]}^0:= \cL_{[A,F,G]}|_{\mathscr{D}_0}$.
\end{lem}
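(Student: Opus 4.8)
The plan is to build $\mathscr{D}_0$ by countable approximation, exploiting that the defining data of $\mathscr{D}$ — the functionals $x^* \in D(A^*)$ and the functions $\varphi \in C^2(\CR^n)$ — can each be replaced by suitable countable subsets without changing the martingale property $\bM^f$ being a local martingale. First I would fix, using separability of $\tilde{E}^*$ in the relevant sense, a countable set $\Lambda \subset D(A^*)$ which is, say, $\|\cdot\|_{D(A^*)}$-dense in $D(A^*)$ (or at least dense enough that $x^* \mapsto (A^*x^*, x^*)$ is approximated in the products topology we need); actually, by Corollary~\ref{c.llmin} it suffices to control $\mathscr{D}_{\min}$, so I would start from the countable family $\mathscr{D}_0' := \{\dual{\cdot}{x^*}^j : x^* \in \Lambda,\ j = 1,2\}$ and first argue that a measure $\PP$ solving the local martingale problem for $\cL_{[A,F,G]}|_{\mathscr{D}_0'}$ in fact solves it for all of $\cL_{[A,F,G]}^{\min}$, hence (Corollary~\ref{c.llmin}) for $\cL_{[A,F,G]}$. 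The passage from $x^* \in \Lambda$ to general $x^* \in D(A^*)$ uses that $x_m^* \to x^*$ in $D(A^*)$ implies $\dual{\bx(s)}{x_m^*} \to \dual{\bx(s)}{x^*}$ and $\dual{\bx(s)}{A^*x_m^*} \to \dual{\bx(s)}{A^*x^*}$ uniformly on compact time intervals (since $\bx$ has continuous, hence locally bounded, paths in $E \hookrightarrow \tilde E$), and similarly $\norm{G(\bx(s))^*x_m^*}_H \to \norm{G(\bx(s))^*x^*}_H$ by boundedness of $G$ on the compact range; then a localization-and-dominated-convergence argument transfers the (local) martingale property of $\bM^{f}$ along the approximation.

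The remaining issue is that Theorem~\ref{t.markov} and the Stroock–Varadhan machinery are stated for the operator $\cL_{[A,F,G]}$ with its full domain $\mathscr{D}$ (or at least $\mathscr{D}_{\min}$), and one may prefer $\mathscr{D}_0$ to be explicitly a \emph{countable subset of $\mathscr{D}$} with the equivalence stated directly at the level of $\mathscr{D}$ rather than routed through $\mathscr{D}_{\min}$. If so, I would in addition fix a countable set $\Phi_0 \subset \bigcup_n C^2(\CR^n)$ which is dense in each $C^2(\CR^n)$ for the topology of uniform convergence of the function together with its first and second derivatives on compacta (e.g. polynomials with rational coefficients, or rational combinations of a countable Schauder-type family), and set
\[
\mathscr{D}_0 := \Big\{ \varphi(\dual{\cdot}{x_1^*}, \ldots, \dual{\cdot}{x_n^*}) : n \in \CN,\ \varphi \in \Phi_0,\ x_1^*, \ldots, x_n^* \in \Lambda \Big\},
\]
which is countable. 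Given $f = \varphi(\dual{\cdot}{x_1^*}, \ldots, \dual{\cdot}{x_n^*}) \in \mathscr{D}$, choose $x_{i,m}^* \in \Lambda$ with $x_{i,m}^* \to x_i^*$ in $D(A^*)$ and $\varphi_m \in \Phi_0$ with $\varphi_m \to \varphi$ in $C^2$ on the (compact, by path-continuity) range of $\xi$; then $f_m := \varphi_m(\dual{\cdot}{x_{1,m}^*}, \ldots) \in \mathscr{D}_0$, and one checks that $\bM^{f_m}(t) \to \bM^{f}(t)$ uniformly on compacts, $\PP$-a.s., using continuity of $\varphi, \varphi_m, \partial\varphi/\partial u_k, \ldots$ and the convergences of the functionals above, together with the explicit formula \eqref{eq.l} for $L_{[A,F,G]}$. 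The stopping times $\tau_{n, f_m}$ of Lemma~\ref{l.independent} let one reduce to uniformly bounded processes on each localization interval, so that the local-martingale property passes to the limit by dominated convergence; conversely, any $\PP$ solving the problem for $\cL_{[A,F,G]}^0$ solves it for $\mathscr{D}_0 \ni f_m$, and the limiting argument recovers $\bM^f$ as a local martingale for every $f \in \mathscr{D}$.

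The main obstacle I anticipate is purely bookkeeping around the localizing sequences: the stopping times $\tau_{n,f}$ depend on $f$, so in the limit $f_m \to f$ one must be careful that a single localizing sequence (e.g. $\tau_{n,f}$ for the limit $f$, or a hitting time of a large ball in $E$ composed with path-continuity) dominates all the $\bM^{f_m}$ simultaneously well enough to justify interchanging limit and conditional expectation; Lemma~\ref{l.independent} is exactly the tool that makes this clean, since it lets us work with canonical, $\PP$-independent stopping times. A secondary subtlety is the exact notion of density needed for $\Lambda$ in $D(A^*)$ and for $\Phi_0$ in $C^2(\CR^n)$ — one needs convergence of $A^*x^*$ and of second derivatives, not merely of $x^*$ and $\varphi$ — but both $D(A^*)$ (a Banach space under the graph norm, separable since $\tilde E^*$ contains a separable closed subspace carrying $A^*$-orbits, or by separability of $E$ and a predual argument) and $C^2(\CR^n)$ with the $C^2$-compact-open topology are separable, so suitable countable dense sets exist. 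No genuinely new idea beyond Theorem~\ref{t.weakmart}, Corollary~\ref{c.llmin}, and Lemma~\ref{l.independent} is required.
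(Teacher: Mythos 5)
There is a genuine gap, and it sits exactly at the point you dispose of in one clause. Your reduction via Corollary \ref{c.llmin}, the truncation/localization bookkeeping with Lemma \ref{l.independent}, and the countable family of $C^2$-functions are all fine and close in spirit to the paper. But your passage from the countable set $\Lambda$ to all of $D(A^*)$ is built on choosing $\Lambda$ \emph{graph-norm dense} in $D(A^*)$. No such countable set exists in general: $R(\lambda,A^*)$ is an isomorphism from $\tilde{E}^*$ onto $(D(A^*),\norm{\cdot}_{D(A^*)})$, so graph-norm separability of $D(A^*)$ is equivalent to norm separability of $\tilde{E}^*$, which fails for a general separable $\tilde{E}$ (take $\tilde{E}=\ell^1$ and $A$ bounded, so that $D(A^*)=\ell^\infty$). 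Your parenthetical justifications (``$\tilde{E}^*$ contains a separable closed subspace carrying $A^*$-orbits'', ``predual argument'') do not repair this; separability of $E$ or $\tilde{E}$ never yields norm separability of the dual. What is available for a separable $\tilde{E}$ is only a countable \emph{sequentially weak$^*$-dense} set, and this is what the paper uses, taking $D=\{R(\lambda,A^*)z_n^*\}$ so that every $x^*\in D(A^*)$ is approximated by $x_m^*\weak^* x^*$ with $A^*x_m^*\weak^* A^*x^*$.

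Once you only have weak$^*$ convergence, the second claim you make in passing --- that $\norm{G(\bx(s))^*x_m^*}_H\to\norm{G(\bx(s))^*x^*}_H$ ``by boundedness of $G$'' --- breaks down: weak$^*$ convergence of $x_m^*$ gives only weak convergence of $G(\bx(s))^*x_m^*$ in $H$, and the $H$-norm is not weakly continuous. This is precisely the difficulty the paper flags for $f=\dual{\cdot}{x^*}^2$ (the term $\norm{G(\cdot)^*x_m^*}_H^2$ in $L_{[A,F,G]}f_m$ need not converge pointwise), and it is resolved there by an extra idea your proposal lacks: a Hahn--Banach/Mazur argument producing convex combinations $y_N^*$ of the tail $(x_m^*)_{m\geq N}$, with rational coefficients so that $y_N^*\in D$, for which the relevant integrals converge \emph{strongly} in $L^2(\Omega,\PP;H)$ and hence, along a subsequence, almost everywhere; only then does dominated convergence transfer the martingale property. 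Your argument as written is correct only under the additional hypothesis that $\tilde{E}^*$ is norm separable (e.g.\ $\tilde{E}$ reflexive, as in the reaction--diffusion application), not in the generality in which the lemma is stated.
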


\begin{proof}

{\it Step 1:} We construct the set $\mathscr{D}_0$.

First note that there exists a countable subset $D$ of $D(A^*)$ such that for every $x^* \in D(A^*)$
there exists a sequence $(x_n^*) \subset D$ such that $x_n^* \weak^* x^*$ and $A^*x_n^* \weak^* A^*x^*$.
Here $\weak^*$ refers to weak$^*$ convergence in $\tilde{E}^*$.
To see this, first note that there is a countable set $\{z_n^*\, : \, n \in \CN\} \subset \tilde{E}^*$ which is 
sequentially weak$^*$-dense in $\tilde{E}^*$, see \S 21.3 (5) of \cite{koethe}. Put $D := \{ R(\lambda, A^*)z_n^* \, : \,
n\in\CN\}$ for some $\lambda \in \rho (A^*)$. Using that $R(\lambda, A^*)$ is $\sigma (\tilde{E}^*,\tilde{E})$-continuous 
as an adjoint operator, it is easy to see that $D$ has the required properties. Replacing $D$ with the 
set of all convex combinations of elements of $D$ with rational coefficients, we may (and shall) assume that such
convex combinations belong to $D$ again.

Now choose a sequence $\varphi_n \in C^2(\CR )$ with the following properties:
\begin{enumerate}
 \item $\varphi_n(t) = t$ for all $-n \leq t \leq n$ and $\varphi_n(t) = 0$
for $t \not\in [-2n, 2n]$.
 \item $\sup_n\norm{\varphi'_n}_{\infty}, \sup_n\norm{\varphi''_n}_\infty < \infty$.
\end{enumerate}
We then define 
\[ \mathscr{D}_0 := \big\{ f = \varphi_n(\dual{\cdot }{x^*})^j\quad\mbox{for some}\,\,
 n \in \CN\, , \, x^* \in D \, ,\, j \in \{1,2\}\big\} .
\]
Clearly, $\mathscr{D}_0$ is countable.
We define $\cL_{[A,F,G]}^0:= \cL_{[A,F,G]}|_{\mathscr{D}_0}$.\medskip 

{\it Step 2:} Now let $\PP$ be a solution of the local martingale problem for $\cL_{[A,F,G]}^0$. We prove that $\PP$
 solves the local martingale problem for $\cL_{[A,F,G]}^\mathrm{min}$. This finishes the proof in 
view of Corollary \ref{c.llmin}.

First note that $\bM^f$ is a local martingale for any $f= \dual{\cdot}{x^*}^j$, $x^*\in D\,,\, j\in\{1,2\}$.
To see this, let $\sigma_n := \inf\{ t >0 \, : \, |\dual{\bx_t}{x^*}|\vee \|\bx_t\| \geq n \}$ and put 
$f_n := \varphi_n(\dual{\cdot}{x^*})^j \in \mathscr{D}_0$. Clearly, $\bM^f_{\sigma_n} = \bM^{f_n}_{\sigma_n}$. 
Since $\PP$ solves the local martingale problem for $\cL_{[A,F,G]}^0$, the process $\bM^{f_n}$, hence by 
optional sampling also $\bM^{f_n}_{\sigma_n}$, is a local martingale under $\PP$. 
Since $F$ and $G$ are bounded on bounded sets, $\bM^{f_n}_{\sigma_n}$ is uniformly bounded.
Thus, $\bM^{f_n}_{\sigma_n}$ is a true martingale by dominated convergence.
This proves that $\bM^f_{\sigma_n}$ is a true martingale under $\PP$ and hence, 
since $\sigma_n \uparrow \infty$ pointwise, that $\bM^f$ is a local martingale under $\PP$.\medskip 

It remains to extend this from $x^* \in D$ to arbitrary $x^* \in D(A^*)$. To that end,
fix $x^*\in D(A^*)$ and a sequence $(x_n^*)\subset D$ such that $x_n^* \weak^* x^*$
and $A^*x_n^* \weak^* A^*x^*$. By the uniform boundedness principle, 
the sequences $(x_n^*)$ and $(A^*x_n^*)$ are bounded in $\tilde{E}^*$, say by $M$.
For $m \in \CN$ put $\tau_m := \inf\{ t>0 \,:\, \norm{\bx (t)} \geq m\}$.

Let us first consider $f := \dual{\cdot }{x^*}$. Arguing as above, we see that for 
$f_n := \dual{\cdot}{x_n^*}$, the stopped process $\bM_{\tau_m}^{f_n}$ is a martingale under $\PP$ for all 
$n,m \in \CN$. Furthermore, since $L_{[A,F,G]}f_n \to L_{[A,F,G]}f$ pointwise, it follows that 
$\bM_{\tau_m}^{f_n}(t) \to \bM_{\tau_m}^{f}(t)$ pointwise as $n\to \infty$,
for all $t \geq 0$.
Since $F$ is bounded on $\bar{B}(0,m)$, say by $C_m$, we find for $t>s$
\[ \big| \bM^{f_n}_{\tau_m}(\bx)(t) - \bM^{f_n}_{\tau_m}(\bx )(s) \big|
 \leq (t-s) \big[ m\cdot M + C_m\cdot M\big] + 2m\cdot M
\]
for all $n, m \in \CN$. Applying the dominated convergence theorem to the sequence 
$(\bM_{\tau_m}^{f_n}(t) - \bM_{\tau_m}^{f_n}(s))\one_B$, where $B$ is an arbitrary set in $\cB_s$, it follows that 
$\int_B\bM_{\tau_m}^{f}(t) - \bM_{\tau_m}^{f}(s))\, d\PP =0$. Since $0\leq s < t$ and $B \in \cB_s$ were 
arbitrary, $\bM_{\tau_m}^f$ is a $\CB$-martingale 
under $\PP$. As $\tau_m\uparrow \infty$ almost surely, this proves that $\bM^f$ is a local martingale under $\PP$.\smallskip

Next consider $f := \dual{\cdot}{x^*}^2$. For $f_n := \dual{\cdot }{x_n^*}^2$, the stopped process
$\bM_{\tau_m}^{f_n}$ is a martingale under $\PP$ for all $n, m \in \CN$. Similarly as above, one
sees that for every $m \in \CN$ the difference $|\bM_{\tau_m}^{f_n}(t) - \bM_{\tau_m}^{f_n}(s)|$
may be majorized by a bounded function independent of $n$. However, due to the term $\|G(\cdot )^*x_n^*\|_H^2$
in $L_{[A,F,G]}f_n$, the weak convergence $x_n^* \weak^* x^*$ does not suffice to conclude that 
$L_{[A,F,G]}f_n \to L_{[A,F,G]}f$ pointwise.
Hence we employ a different method here. 

We fix $0\leq s < t$ and $m \in \CN$. The dominated convergence theorem yields weak convergence 
\[ 
\int_s^t\one_{[0,\tau_m]}(r) G(\bx_r)^*x_n^*\, dr \weak \int_s^t \one_{[0,\tau_m]}(r) G(\bx_r)^*x^*\, dr
 \quad \mbox{in}\,\, L^2(C([0,\infty);E), \PP; H)\,.
\]
Hence  $\int_s^t\one_{[0,\tau_m]}(r) G(\bx_r)^*x^*\, dr$ belongs to the weak closure of
the tail sequence $\big( \int_s^t\one_{[0,\tau_m]}(r) G(\bx_r)^*x_n^* \, dr\big)_{n\geq N}$, for any $N \in \CN$. 
By the Hahn-Banach theorem, it belongs to the strong closure of that tail, whence we find vectors $y_N^*$, belonging 
to the convex hull the sequence $(x_n^*)_{n\geq N}$, such that we have \emph{strong} convergence 
\[ 
 \int_s^t\one_{[0,\tau_m]}(r) G(\bx_r)^*y_N^*\, dr \to \int_s^t\one_{[0,\tau_m]}(r) G(\bx_r)^*x^*\, dr
 \quad \mbox{in}\,\, L^2(C([0,\infty);E), \PP;H)\,.
\]
After passing to a subsequence, we may assume that this convergence holds pointwise $\PP$-a.e.
Note that $y_N^* \weak^* x^*$, as $y_N^*$ belongs to the tail $(x_n^*)_{n\geq N}$. Hence
it follows that
\[ \bM_{\tau_m}^{g_N}(t) - \bM_{\tau_m}^{g_N}(s) 
 \to \bM_{\tau_m}^{f}(t) - \bM_{\tau_m}^{f}(s)
\]
pointwise $\PP$-almost everywhere. Here, $g_N := \dual{\cdot}{y_N^*}^2$. 

Note that we may assume without loss of generality that $y_N^*$ is a convex combination 
of the $(x_n^*)_{n\geq N}$ with rational coefficients.
Hence, $y_N \in D$ and thus $g_N \in \mathscr{D}_0$, implying that $\bM_{\tau_m}^{g_N}$ is a martingale under $\PP$
for all $N \in \CN$. Now, similarly as above, the dominated convergence theorem shows that $\bM^f_{\tau_m}$ is a martingale
under $\PP$
for all $m \in \CN$. This finishes the proof.
\end{proof}

Now the announced result about the equivalence of well-posedness and complete well-posedness 
follows similar to the finite-dimensional case, cf.\ \cite[Theorem 21.10]{kallenberg}.

\begin{thm}\label{t.wellposed}
Suppose that the local martingale problem for $\cL_{[A,F,G]}$ is well-posed. Then it is completely well-posed. 
Consequently, all weak solutions of equation $[A,F,G]$ are strong Markov processes with a common transition 
semigroup $\mathscr{T}$.
\end{thm}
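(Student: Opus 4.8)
The plan is to reduce the problem, via Lemma~\ref{l.count}, to the countable operator $\cL^0 := \cL_{[A,F,G]}^0$ defined on $\mathscr{D}_0$, and then to follow the classical scheme of \cite[Theorem~21.10]{kallenberg}: first establish Borel measurability of the map $x \mapsto \PP_x$, then use a disintegration argument to upgrade the assumed well-posedness (uniqueness of $\PP_x$ for degenerate initial data $\delta_x$) to the uniqueness hypothesis of Theorem~\ref{t.markov} for arbitrary $\mu \in \mathcal{P}(E)$, and finally invoke Theorem~\ref{t.markov} for the conclusions. Throughout, the reduction to the \emph{countable} family $\mathscr{D}_0$ is what makes the two measure-theoretic steps go through.

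\emph{Measurability of $x \mapsto \PP_x$.} I would first show that the set $\cM$ of all $\PP \in \mathcal{P}(C([0,\infty);E))$ solving the local martingale problem for $\cL^0$ is a Borel subset of the Polish space $\mathcal{P}(C([0,\infty);E))$. By Lemma~\ref{l.independent}, $\PP \in \cM$ if and only if, for every $f \in \mathscr{D}_0$ and every $n \in \CN$, the bounded process $\bM^f_{\tau_{n,f}}$ is a true $\PP$-martingale with respect to $\mathbb{B}$; since the paths are continuous, $\cB_s$ is countably generated, so this amounts to the countably many identities $\expect_\PP\big[\one_A\big(\bM^f_{\tau_{n,f}}(t) - \bM^f_{\tau_{n,f}}(s)\big)\big] = 0$ indexed by $f \in \mathscr{D}_0$, $n \in \CN$, rationals $0 \le s < t$, and $A$ in a countable algebra generating $\cB_s$. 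Each integrand is a \emph{bounded Borel} function of $\bx$ --- note that $\bx \mapsto \bM^f(\bx)$ is Borel but, because $F$ and $G$ are merely measurable, \emph{not} continuous --- and $\PP \mapsto \int \Psi \, d\PP$ is Borel measurable for every bounded Borel $\Psi$; hence $\cM$ is Borel. Consequently the set $\{(x,\PP) : \PP \in \cM,\ \PP\circ\bx(0)^{-1} = \delta_x\}$ is a Borel subset of $E \times \mathcal{P}(C([0,\infty);E))$, and by the assumed well-posedness together with Lemma~\ref{l.count} it is precisely the graph of $x \mapsto \PP_x$. A function into a Polish space whose graph is Borel is itself Borel measurable (see, e.g., \cite{kechris}), so $x \mapsto \PP_x$ is Borel; since $\PP \mapsto \PP(B)$ is Borel for each $B \in \cB$, so is $x \mapsto \PP_x(B)$.

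\emph{Uniqueness for arbitrary $\mu$.} Next I would fix $\mu \in \mathcal{P}(E)$ and let $\PP$ be any solution of the local martingale problem for $(\cL^0, \mu)$. Disintegrating over the initial value, write $\PP = \int_E \PP^x \, \mu(dx)$, where $x \mapsto \PP^x$ is a regular conditional distribution of $\PP$ given $\cB_0$ (which exists since $C([0,\infty);E)$ is Polish) chosen so that $\PP^x\circ\bx(0)^{-1} = \delta_x$ for $\mu$-a.e.\ $x$. A routine conditioning computation shows that for each fixed $f \in \mathscr{D}_0$ and $n \in \CN$ the process $\bM^f_{\tau_{n,f}}$ is a $\PP^x$-martingale for $\mu$-a.e.\ $x$; because $\mathscr{D}_0$ and the accompanying test data are countable, the exceptional null sets can be amalgamated into a single $\mu$-null set, so that $\PP^x$ solves the local martingale problem for $(\cL^0, \delta_x)$ for $\mu$-a.e.\ $x$. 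By well-posedness and Lemma~\ref{l.count} this forces $\PP^x = \PP_x$ for $\mu$-a.e.\ $x$, whence $\PP = \int_E \PP_x \, \mu(dx)$; in particular $\PP$ is uniquely determined by $\mu$. Thus any two solutions of the local martingale problem for $(\cL^0, \mu)$ --- equivalently, by Corollary~\ref{c.llmin}, for $(\cL_{[A,F,G]}, \mu)$ --- coincide, which verifies the uniqueness hypothesis of Theorem~\ref{t.markov}.

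\emph{Conclusion.} The hypotheses of Theorem~\ref{t.markov} now hold for $\cL_{[A,F,G]}$: uniqueness of solutions (a fortiori of one-dimensional marginals) for every $\mu$, existence of $\PP_x$ for every $x \in E$ by assumption, and Borel measurability of $x \mapsto \PP_x(B)$. Part~(3) of that theorem yields existence of a solution for every $\mu$, so the local martingale problem for $\cL_{[A,F,G]}$ is completely well-posed; parts~(1) and~(4) produce a transition semigroup $\bT$, given by $T(t)f(x) = \int f(\bx(t)) \, d\PP_x$, such that every solution of the local martingale problem for $\cL_{[A,F,G]}$ is a strong Markov measure with transition semigroup $\bT$. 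By the correspondence of Theorem~\ref{t.weakmart}, the law of any weak solution of equation $[A,F,G]$ solves this local martingale problem, hence every weak solution is a strong Markov process with the common transition semigroup $\bT$. The main obstacle is the measurability bookkeeping in the first two steps: since $F$ and $G$ are only measurable, $\bx \mapsto \bM^f(\bx)$ fails to be continuous, so $\cM$ cannot be shown to be closed and one must instead verify that it is Borel and appeal to a measurable-selection result; and in the disintegration step the exceptional null set must not depend on the test function, which is possible only because the family $\mathscr{D}$ has been replaced by the countable family $\mathscr{D}_0$ of Lemma~\ref{l.count}.
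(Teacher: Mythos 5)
Your proposal is correct and follows essentially the same route as the paper: reduction to the countable family $\mathscr{D}_0$ of Lemma \ref{l.count}, Borel measurability of the set of solutions and hence of $x \mapsto \PP_x$, disintegration via a regular conditional probability to obtain uniqueness for arbitrary initial distributions $\mu$, and finally an appeal to Theorem \ref{t.markov} and Theorem \ref{t.weakmart}. The only cosmetic difference is in the measurability step, where you use the Borel-graph theorem while the paper applies the Kuratowski theorem to the injective map sending a solution with degenerate initial law to its starting point; both are standard descriptive-set-theoretic facts and the arguments are interchangeable.
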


\begin{proof}
We first prove the measurability of the map $x \mapsto \PP_x$.
Consider the set $V := \{ \PP_x \, : \, x \in E\}$. 
We claim that $V$ is a Borel subset of $\cP (C([0,\infty);E))$. 
Indeed, by well-posedness, $V = V_1\cap V_2$, where $V_1$ is the set of all probability measures with degenerate
initial distributions and $V_2$ is the set of all solutions to the martingale problem. 

Since the map $\PP \mapsto
\PP\circ \bx(0)^{-1}$ is measurable from $\cP (C([0,\infty);E))$ to $\cP (E)$, the measurability
of $V_1$ follows from \cite[Lemma 1.39]{kallenberg}.

By Lemma \ref{l.count}, $\PP \in V_2$ if and only if $\bM^{f}$ is a local martingale under $\PP$ 
for all $f \in \mathscr{D}_0$.
With $\tau_n := \inf\{t >0\,:\, \norm{\bx (t)}\geq n\}$, this is equivalent with 
\[ \int_B \bM^{f}(\tn) \, d\PP = \int_B \bM^{f}(s\wedge \tau_n) \, d\PP \quad \forall\, s< t, \,
 B \in \mathscr{B}_s \,,\, n \in \CN.
\]
However, using continuity of $t \mapsto \bx (t)$ and the fact that the $\sigma$-algebra $\mathscr{B}_s$
is countably generated for all $s >0$, we see that $\bM^f$ is a local martingale under $\PP$ whenever the above equality holds 
for $n \in \CN, s,t \in \CQ$ with $s<t$ and $B$ in a countable subset of $\mathscr{B}_s$. 
Hence the set $V_2$ is determined by countably many `measurable relations' and hence measurable.
It follows that $V$ is measurable as claimed.\smallskip 

Now define the map $\Phi : V \to E$ by defining $\Phi (\PP)$ as the unique $x$ such that 
$\PP\circ \bx_0^{-1} = \delta_x$. Clearly, $\Phi$ is injective. Furthermore, $\Phi$ is measurable as 
the composition of the measurable map $\PP \circ \bx_0^{-1}$ and the inverse of
the map $x \mapsto \delta_x$, which establishes a homeomorphism between $E$ and the range of that map.
By the Kuratowski Theorem, see \cite[Section 1.3]{partha}, the inverse $\Phi^{-1}$ is measurable, i.e.
$x \mapsto \PP_x$ is a measurable map from $E$ to $\cP (C([0,\infty);E))$
\medskip

It remains to prove the uniqueness of solutions with arbitrary initial distributions $\mu$ 
for the martingale problem for $\cL_{[A,F,G]}$. The existence of
solutions with general initial distributions will then follow from Theorem \ref{t.markov}.

To that end, assume that $\PP$ solves the local martingale problem for $\cL_{[A,F,G]}$ and that 
$\bx(0)$ has distribution $\mu \in \cP (E)$.
Let $\QQ : E \times \cB \to [0,1]$ be a regular conditional probability (under $\PP$) 
for $\cB$ given $\bx_0$. Then
\[ \PP (A) = \int_E \QQ (x,A)\, d\mu (x) \quad \forall\, A \in \mathscr{B}\,.\]

Now let $t>s\geq 0$ and $B \in \cB_s$ be given. Then, for $f\in \mathscr{D}$, we have
\[
 \int_B \bM^{f}(\tn) - \bM^{f}(s\wedge \tau_n)\, d\QQ(x, \cdot ) =
\int_{B\cap \{\bx (0) = x\}} \bM^{f}(\tn ) - \bM^{f}(s\wedge \tau_n) \, d\PP = 0
\]
for $\mu$-almost every $x$.
We note that the null-set outside of which this equation holds depends on $t, s, n, B$ and the function $f$. 
However, arguing as above, we see that for fixed $f$, there exists a null-set $N(f)$, such that the above equation holds
outside $N(f)$ for \emph{all} $t>s, n \in \CN$ and $B \in \cB_s$. Putting $N := \bigcup_{f \in \mathscr{D}_0}N(f)$, 
it follows that outside of $N$, the above holds for all $t>s,n\in \CN, B \in \cB_s$ and $f \in \mathscr{D}_0$. 
This implies that for $\mu$-a.e.\ $x$ the measure $\QQ (x, \cdot )$ solves the local martingale problem for 
$\cL_{[A,F,G]}^0$ and hence, by Lemma \ref{l.count}, the local martingale problem for $\cL_{[A,F,G]}$. 
By well-posedness, $\QQ(x, \cdot ) = \PP_x (\cdot )$ for $\mu$-a.e.\ $x$. Hence we have
\begin{equation}\label{eq.rep}
 \PP (A) = \int_E \PP_x(A) \, d\mu (x ) \quad \forall\, A \in \mathscr{B} \,,
\end{equation}
This shows that uniqueness of solutions of the local martingale problem for $(\cL, \delta_x)$ for all $x \in E$
implies uniqueness of the solution of the local martingale problem for $(\cL, \mu )$ for arbitrary initial 
distribution $\mu$.
\end{proof}

We end this section by establishing a result which allows us to construct solutions to equation 
$[A,F,G]$ from solutions of approximate equations $[A, F_n, G_n]$.

\begin{lem}\label{l.approx}
Suppose we are given sequences $(F_n)_{n\in\CN}$ and $(G_n)_{n\in\CN}$ which satisfy the assumptions 
of Hypothesis \ref{hyp1}, are continuous and are uniformly bounded on bounded sets. Furthermore, assume that $F_n(x)$ converges to $F(x)$ 
in $\tilde{E}$  and $G_n(x)$ converges to $G(x)$ in $\cL (H, \tilde{E})$, both convergences being uniform on the compact subsets of $E$.

If $\PP_n$ solves the martingale problem associated with equation $[A,F_n,G_n]$ and
if the sequence $(\PP_n)_{n\in\CN}$ is tight, then any accumulation point of the sequence solves the 
martingale problem associated with $[A,F,G]$.
\end{lem}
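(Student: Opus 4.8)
The plan is to show that a (sub)sequential limit $\PP$ of $(\PP_n)$ solves the local martingale problem for $\cL_{[A,F,G]}^{\min}$, which by Corollary \ref{c.llmin} is enough. Relabeling, I may assume $\PP_n \Rightarrow \PP$ weakly. Fix $x^* \in D(A^*)$ and $j \in \{1,2\}$, set $f := \dual{\cdot}{x^*}^j$, and let $\tau_m := \inf\{t>0 : \norm{\bx(t)} \geq m\}$. As in Lemma \ref{l.count} it suffices to prove that each stopped process $\bM^f_{\tau_m}$ (with $\bM^f$ built from $\cL_{[A,F,G]}$) is a martingale under $\PP$; since $\tau_m \uparrow \infty$ $\PP$-a.e.\ this yields that $\bM^f$ is a local martingale. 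Concretely, I must pass to the limit in the identity
\[
 \int_B \big(\bM^{f,n}_{\tau_m^n}(t) - \bM^{f,n}_{\tau_m^n}(s)\big)\, d\PP_n = 0,
\]
valid for all $s < t$ and $B \in \cB_s$, where $\bM^{f,n}$ is the martingale associated with $\cL_{[A,F_n,G_n]}$. The two essential issues are (i) the stopping time $\tau_m$ is not a continuous functional on $C([0,\infty);E)$, and (ii) the drift/diffusion coefficients in $\bM^{f,n}$ depend on $n$.

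First I would handle the coefficient convergence. By the uniform boundedness on bounded sets, $F_n \to F$ and $G_n h \to Gh$ uniformly on compacta, together with the fact that the trajectories $r \mapsto \bx(r)$ stay in $\bar B(0,m)$ on $[0,\tau_m]$, the integrands $\dual{\bx(r)}{A^*x^*} + \dual{F_n(\bx(r))}{x^*}$ and $\norm{G_n(\bx(r))^*x^*}_H^2$ converge uniformly (on $[0,t]$, on the event $\{\tau_m \geq \cdot\}$) to their limits with $F,G$. Hence, as functionals on $C([0,\infty);E)$, the maps $\bx \mapsto \bM^{f,n}_{\tau_m}(\bx)(t) - \bM^{f,n}_{\tau_m}(\bx)(s)$ converge uniformly on compact subsets of $C([0,\infty);E)$ to $\bx \mapsto \bM^f_{\tau_m}(\bx)(t) - \bM^f_{\tau_m}(\bx)(s)$, and this family is uniformly bounded by a constant depending only on $m, t, x^*$ (the bound is as in Lemma \ref{l.count}). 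To deal with (i), I replace $\tau_m$ by a smooth approximation: fix a large $M$ with $\PP(\norm{\bx(u)} = m \text{ for some } u \leq t) $ treated via choosing $m$ outside the at most countably many `bad' levels so that $\PP(\bx$ hits $\partial B(0,m)$ tangentially$) = 0$; at such levels the map $\bx \mapsto \tau_m(\bx) \wedge t$ is $\PP$-a.s.\ continuous, hence so is the whole functional, and since $\PP_n \Rightarrow \PP$ with uniformly bounded, $\PP$-a.s.\ continuous integrands, the continuous mapping theorem plus uniform integrability give the limit. (Alternatively, and more cleanly, one approximates $\one_{[0,\tau_m]}$ from above by Lipschitz functions of $\sup_{u\le\cdot}\norm{\bx(u)}$; I would present whichever is shorter.)

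The combination argument then runs: for $B$ in the countable generating family of $\cB_s$ and $s<t$ rational, $\int_B (\bM^{f,n}_{\tau_m}(t) - \bM^{f,n}_{\tau_m}(s))\,d\PP_n \to \int_B (\bM^f_{\tau_m}(t) - \bM^f_{\tau_m}(s))\,d\PP$, because the integrand converges $\PP$-a.s.\ (uniform-on-compacta convergence of the functionals, plus $\PP$-a.s.\ continuity of the limit functional, plus $\PP_n \Rightarrow \PP$) and is uniformly bounded; the left side is $0$ for every $n$ by the martingale property under $\PP_n$, so the right side is $0$. By the countable-generation argument of Lemma \ref{l.count}/Theorem \ref{t.wellposed} this extends to all $B \in \cB_s$ and all $s<t$, so $\bM^f_{\tau_m}$ is a $\PP$-martingale for every admissible level $m$, and letting $m \to \infty$ through such levels shows $\bM^f$ is a local martingale under $\PP$. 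Since this holds for $f = \dual{\cdot}{x^*}^j$ with $x^* \in D(A^*)$, $j \in \{1,2\}$, $\PP$ solves the local martingale problem for $\cL_{[A,F,G]}^{\min}$, and Corollary \ref{c.llmin} finishes the proof.

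The step I expect to be the main obstacle is reconciling the two limiting operations: the coordinate-process limit $\PP_n \Rightarrow \PP$ only controls functionals that are $\PP$-a.s.\ continuous, whereas the natural functional involves the hitting time $\tau_m$, which is merely lower semicontinuous. Getting a clean statement — either by restricting to the (cocountably many) levels $m$ at which the boundary is hit non-tangentially $\PP$-a.s., or by a Lipschitz mollification of the indicator $\one_{[0,\tau_m]}$ — while simultaneously absorbing the $n$-dependence of the coefficients, is where the real care is needed; everything else is the same bookkeeping already carried out in Lemma \ref{l.count}.
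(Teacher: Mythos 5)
Your overall strategy (pass to the limit in the stopped-martingale identity for $f=\dual{\cdot}{x^*}^j$, then invoke Corollary \ref{c.llmin}) is close in spirit to the paper's, but the limit-passage step as you have written it contains a genuine gap. You claim that for $B$ in a countable generating family of $\cB_s$ one has $\int_B (\bM^{f,n}_{\tau_m}(t)-\bM^{f,n}_{\tau_m}(s))\,d\PP_n \to \int_B (\bM^{f}_{\tau_m}(t)-\bM^{f}_{\tau_m}(s))\,d\PP$ ``because the integrand converges $\PP$-a.s.\ and is uniformly bounded.'' This does not follow: the measures change with $n$, so dominated convergence is unavailable, and weak convergence $\PP_n\Rightarrow\PP$ only transports integrals of functionals that are bounded and $\PP$-a.s.\ continuous. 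The indicator $\one_B$ of a Borel cylinder set from a generating family is not continuous (and you cannot arrange $\PP(\partial B)=0$ for a family fixed in advance), so the very first display of your ``combination argument'' is unjustified; the countable-generation argument of Lemma \ref{l.count}/Theorem \ref{t.wellposed} is of no help here, since it operates under a single fixed measure. The standard repair — and the route the paper takes — is to test the increment not against $\one_B$ but against products $\prod_j h_j(\bx(s_j))$ with $h_j\in C_b(E)$ and $s_j\le s$: these are continuous functionals on $C([0,\infty);E)$, the limit can be taken (the paper does this via $\beta_0$-convergence of the functionals $\Phi_n\to\Phi$ together with tightness), and a monotone class argument afterwards upgrades the resulting identity to all $B\in\cB_s$, giving the martingale property of $\bM^f_{\tau_M}$ under $\PP$.

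On the other hand, your concern about the discontinuity of the hitting time $\tau_m$ is legitimate and, in fact, is a point the paper glosses over (it simply asserts that $\Phi_n$ is continuous). Your proposed fixes — restricting to levels $m$ at which the functional $\bx\mapsto\tau_m(\bx)\wedge t$ is $\PP$-a.s.\ continuous (all but countably many levels, though this needs a short argument), or replacing $\one_{[0,\tau_m]}$ by a Lipschitz mollification in terms of $\sup_{u\le\cdot}\norm{\bx(u)}$ — are reasonable and would have to be carried out carefully; letting $m\to\infty$ along good levels still yields the local martingale property. So the treatment of the coefficients ($F_n\to F$, $G_nh\to Gh$ uniformly on compacta, uniform local boundedness, hence locally uniform convergence of the functionals) and of the localization is essentially sound; the flaw to repair is the use of indicator test sets in place of continuous test functions when exploiting $\PP_n\Rightarrow\PP$.
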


\begin{proof}
For a number $M \in \CR$ we put $\tau_M := \inf \{ t > 0 \, : \, \norm{\bx_t} \geq M\}$. Now fix 
$0 \leq s_1 < \cdots < s_N \leq s < t\, , \, N \in \CN$, and for $j =1, \ldots , N$ functions $h_j \in C_b(E)$ and 
$f = \varphi (\dual{\cdot }{x_1^*}, \ldots , \dual{\cdot}{x_m^*}) \in \mathscr{D}$.

We define $\Phi_n : C([0,\infty ); E) \to \CR$ by 
\[
 \Phi_n (\bx ) := \Big[ f(\bx_{t\wedge \tau_M}) - f(\bx_{s\wedge \tau_M}) - \int_s^t
\one_{[0,\tau_M]}(r)\big( L_nf\big)(\bx_r)\, dr\Big]\cdot \prod_{j=1}^N h_j(\bx_{s_j}),
\]
where $L_n:= L_{[A,F_n,G_n]}$.
Similarly, we define the function $\Phi$, replacing $L_n$ with $L:=L_{[A,F,G]}$.

Using the assumption that $F_n$ and $G_n$ 
are uniformly bounded on bounded subsets, it is easy to see that the sequence $\Phi_n$ is uniformly bounded.

The assumptions on the convergence of $F_n$ and $G_n$ imply that $L_nf$ converges to $Lf$, uniformly on the 
compact subsets of $E$. Now let a compact subset $\mathscr{C}$ of $C([0,\infty ); E)$ be given. By the Arzel\`a-Ascoli 
theorem, there exists a compact subset $K$ of $E$ such that $\bx_r \in K$ for all $0\leq r \leq t$, whenever $\bx 
\in \mathscr{C}$. Let $C := \prod_{j=1}^n \|h_k\|_\infty$. Given $\eps >0$, pick $n_0$ such that 
$|L_nf(x) - Lf(x)| \leq \eps$ for all $x \in K$, whenever $n\geq n_0$. Then, for $\bx \in \mathscr{C}$ and $n\geq n_0$
we have
\[
 |\Phi_n (\bx ) - \Phi (\bx )| \leq \int_s^t \one_{[0,\tau_M]}(r) |L_nf(\bx_r) - Lf(\bx_r)|\, dr \cdot C
\leq |t-s|\eps C,
\]
proving that $\Phi_n$ converges to $\Phi$ uniformly on compact subsets of $C([0,\infty ); E)$. 
\medskip 

Now let $\PP$ be an accumulation point of the sequence $(\PP_n)$. Passing to a subsequence, we may assume that
$\PP_n$ converges weakly to $\PP$. In particular, the sequence $(\PP_n)$ is tight. Thus, given $\eps>0$, we find a compact
set $\mathscr{C}$ of $C([0,\infty); E)$ such that $2c\PP_n(\mathscr{C}^c) \leq \eps$, where $c$ is such that 
$\|\Phi_n\|_\infty \leq c$. It follows that
\[ \Big| \int \Phi \, d\PP - \int \Phi_n \, d\PP_n \big|
 \leq \Big| \int \Phi \, d\PP - \int \Phi \, d\PP_n \big| + \eps +  \sup_{\bx \in \mathscr{C}}|\Phi (\bx) - \Phi_n(\bx)|.
\]
To conclude that $\int \Phi \, d\PP = \lim_{n\to\infty } \int \Phi_n\, d\PP_n = 0$, it remains to prove that $\int \Phi \, d\PP_n$ converges to $\int \Phi \, d\PP$.
We know that $\PP_n$ converges weakly to $\PP$. Unfortunately, the function $\Phi$ is not continuous. However, it is continuous at all points 
$\by$ at which the map $\bx \mapsto \tau_M(\bx)$ is continuous. Moreover, it can be proved that the set of all $M$ such that
$\PP (\{\by : \tau_M\,\, \mbox{is discontinous at}\, \by\} ) > 0$ is countable, see \cite[Lemma 3.5 and 3.6]{hs12} (see also Sections VI.2 and VI.3 of
\cite{js}). We can thus find a number $M$ such that $\Phi$ is continuous except for a $\PP$-null set. As is well known, see \cite[Cor. 8.4.2]{bogachev},
this together with the weak convergence of the $\PP_n$ suffices to conclude that $\int \Phi d\PP_n \to \int \Phi\, d\PP$, as desired and
it follows that $\int \Phi\, d\PP =0$.

 Since the sampling points 
$(s_j)$ and $s, t$ as well as the functions $h_j$ were arbitrary, it follows from a monotone class argument that 
\[ f(\bx_{t\wedge \tau_M}) - f(\bx_{0\wedge\tau_M})-\int_0^t \one_{[0,\tau_M]}(r)Lf(\bx_r)\, dr \]
is a martingale under $\PP$. Since $f$ was arbitrary, and we can pick a sequence $M_k \uparrow \infty$ such that the above is true, we have 
proved that 
$\PP$ solves the local martingale problem associated with 
equation $[A,F,G]$.
\end{proof}

As a corollary, we obtain a sufficient condition for the Feller property of the associated transition semigroup.

\begin{cor}
Assume that equation $[A,F,G]$ is well-posed and that $F$ and $G$ are continuous. We denote by $\mathscr{T}$ the transition semigroup 
for the associated martingale problem for $\cL_{[A,F,G]}$ and by $\PP_\mu$ the unique solution 
of the local martingale problem for $(\cL_{[A,F,G]}, \mu)$. The following are equivalent
\begin{enumerate}
 \item The map $\mu \mapsto \PP_\mu$ is continuous from $\cP (E)$ to $\cP (C([0,\infty); E))$ 
where both are endowed with their respective weak topology.
 \item If $x_n \to x$ in $E$, then the set $\{ \PP_{x_n}\,:\, n \in \CN\}$ is tight. 
\end{enumerate}
In this case, the semigroup $\mathscr{T}$ has the Feller property, i.e.\ $\mathscr{T}(t)f \in C_b(E)$ for all 
$f \in C_b(E)$.
\end{cor}

\begin{proof}
(1) $\Rightarrow$ (2): If $x_n \to x$ then $\delta_{x_n} \to \delta_x$ weakly. In particular, 
$\{\delta_{x_n}\,:\, n \in \CN\}$ is relatively weakly compact. By (1) the set $\{\PP_{x_n}\,:\,  
n\in\CN\}$ is relatively weakly compact hence tight.\medskip 

(2) $\Rightarrow$ (1): Let $x_n \to x$. By (2), $\{\PP_{x_n}\,:\, n \in \CN\}$ is tight. By Lemma 
\ref{l.approx} any accumulation point of the $\PP_{x_n}$ must solve the local martingale problem 
for $\cL_{A,F,G}$. Since every accumulation point also must have initial distribution $\delta_x$, well-posedness 
implies that the only accumulation point is $\PP_x$. Now a subsequence-subsequence argument yields that 
$\PP_{x_n}$ converges weakly to $\PP_x$. This proves that the map $x \mapsto \PP_x$ is continuous from $E$ to 
$\cP (C([0,\infty); E))$.

It follows from the proof of uniqueness in Theorem \ref{t.markov}, namely from equation \eqref{eq.rep}, that 
\[
 \int \Phi \, d\PP_\mu = \int_E \int \Phi \, d\PP_x \, d\mu (x),
\]
for all bounded, continuous functions $\Phi$ on $C([0,\infty); E)$. With this representation the
continuity of $\mu \mapsto \PP_\mu$ follows.\medskip 

If (1) or, equivalently, (2) is satisfied, then the Feller property of $\mathscr{T}$ follows from the identity 
$\mathscr{T}(t)f(x) = \int f\circ \pi_t \, d\PP_x$ and the fact that $f\circ \pi_t$ is a bounded, continuous function 
on $C([0,\infty); E)$.
\end{proof}

\section{Yamada-Watanabe theory}\label{sect.yw}

In view of Theorem \ref{t.weakmart}, the uniqueness requirement for the local martingale problem associated 
with \eqref{eq.sde} is equivalent with the requirement that whenever $\bX_1$ and $\bX_2$ are weak solutions
of \eqref{eq.sde}, possibly defined on different probability spaces, 
such that $X_1(0)$ and $X_2(0)$ have the same distribution $\mu$, then $\bX_1$ and $\bX_2$ have the same distribution 
as $C([0,\infty ); E)$-valued random variables. In this situation, one says that \emph{uniqueness in law} or \emph{uniqueness in 
distribution} holds. 

In some cases, in particular in the case of Lipschitz continuous coefficients, it is easier to verify a different
notion of uniqueness.

\begin{defn}
We say that \emph{pathwise uniqueness} holds for solutions of equation \eqref{eq.sde}
 if whenever  $((\Omega, \Sigma,\FF,  \P), W_H, \bX_j)$ are weak solution of \eqref{eq.sde} for $j=1,2$ 
with $X_1(0) = X_2(0)$ almost surely, then $\P (X_1(t) = X_2(t)\, \forall\, t \geq 0)=1$. 
\end{defn}

A classical result of Yamada and Watanabe \cite{yw1} asserts that in the case where $E = \CR^d$ and $W_H$ is a finite 
dimensional Brownian motion, i.e.\ $H$ is finite-dimensional, pathwise uniqueness implies uniqueness in law. 
Pathwise uniqueness also has other far-reaching consequences, most notably, it implies the \emph{strong} existence of 
solutions.

\begin{defn}
A weak solution $((\Omega, \Sigma, \FF, \P), W_H, \bX)$ is said to \emph{exist strongly} if  $\bX$ is adapted 
to the filtration $\mathds{G} := (\cG_t)_{t \geq 0}$, where $\cG_t$ is the augmentation of 
$\sigma (X(0), W_Hh_k(s) \,:\, s \leq t, k \in I)$. Here, $(h_k)_{k \in I}$ is a finite or countably infinite orthonormal basis of $H$.
\end{defn}

A priori, strong existence of solutions is a mere measurability requirement. This requirement captures the idea that the information needed to 
construct a solution to a stochastic differential equation is already contained in the initial datum and the Wiener process. Of particular 
importance in applications is the fact that given pathwise uniqueness solutions can be constructed on a \emph{given} stochastic basis and 
with respect to a \emph{given} $H$-cylindrical Wiener process, see Corollary \ref{c.strongexistence}.

Ondrej\'at \cite{onddiss} has generalized the Yamada-Watanabe results to the situation where $E$ is a 2-smoothable Banach space.
One of the main difficulties he had to overcome was to prove that  distributional copies of solutions are again solutions. 
As he was working with the concept of mild solutions, this required a 
detailed study of the distributions of Banach space valued stochastic integrals. In our situation, with the concept of weak solutions, 
the proof is easier and can in fact be reduced to the finite dimensional situation.

\begin{thm}\label{t.yw}
Pathwise uniqueness for \eqref{eq.sde} implies uniqueness in law. Moreover, every solution of \eqref{eq.sde} exists strongly.
\end{thm}

For the convenience of the reader, we include a full proof which follows closely the proof in the finite dimensional situation. It is also possible to 
show that our situation fits into the abstract framework considered in \cite{kurtz07} and to obtain Theorem \ref{t.yw} from the results proved there.

\begin{proof}
Let two weak solutions $((\Omega_j, \Sigma_j, \FF_j, \P_j), W_H^j, \bX_j)$ of equation \eqref{eq.sde} be given such that 
$X_1(0)$ and $X_2(0)$ have the same distribution $\mu$. We first define distributional copies of these two solutions on a common 
stochastic basis.

To that end, we fix an orthonormal Basis $(h_n)_{n\in \CN}$ (the case where $H$ is finite dimensional is similar) of $H$ and define 
the measure $\PP_j$ on the Borel $\sigma$-algebra of 
\[
 \tilde{\Omega} := C([0,\infty); E) \times E \times C([0,\infty); \CR^\infty), 
\]
viewed as the countable product of Polish spaces,
as the image of $\P_j$ under the map
\[
\omega_j \mapsto \big(X_j (\cdot, \omega_j) - X_j (0, \omega_j), X_j(0,\omega_j), (H_H^j(\cdot, \omega_j)h_n)_{n \in \CN}\big) \,
\]
A typical element of $\tilde{\Omega}$ will be denoted by $(\by, x_0, \bw)$. Note that the projection of $\PP_j$  to $C([0,\infty); \CR^\infty)$
is the countable product of Wiener measure; we denote this measure by $\mathds{W}$. Thus, under $\PP_j$, 
the random element $(x_0, \bw)$ has distribution $\mu\otimes \mathds{W}$. 

We let $\QQ_j$ be a regular conditional distribution of $\by$ given 
$(x_0, \bw)$ under $\PP_j$, i.e.\ $\QQ_j (x_0, \bw, \cdot)$ is a probability measure on $\cB(C([0, \infty); E))$ for all $x_0 \in E$ and $\bw \in C([0,\infty); \CR^\infty)$ 
and given sets $A \in \cB(C([0, \infty); E))$, $B \in \cB(E)$ and  $C \in \cB(C([0, \infty); \CR^\infty))$, we have 
\[
\PP_j(A\times B\times C) = \int_{B\times C} \QQ_j (x_0, \bw, A) \, d(\mu\otimes \mathds{W})(x_0, \bw) .
\]
We now define distributional copies of the solutions on a common probability space. We put 
\[
 \Omega := C([0,\infty); E) \times C([0, \infty); E) \times E \times C([0,\infty); \CR^\infty), 
\]
and denote a canonical element of $\Omega$ by $(\by_1, \by_2, x_0, \bw)$. We define the measure $\PP$ on the Borel
$\sigma$-algebra $\Sigma$ of $\Omega$ by 
\[
\PP (A\times B \times C \times D) := \int_{C\times D} \QQ_1(x_0, \bw, A) \QQ_2(x_0, \bw, B) \, d(\mu\otimes \mathds{W})(x_0, \bw).
\]
Finally, we define $\cG_t := \sigma (x_0, \by_1(s), \by_2(s), \bw(s) : s \leq t)$, $\cF_t$ as the augmentation of $\cG_{t+}$ by the $\PP$-null sets and set $\FF := (\cF_t)_{t \geq 0}$. 
As in the finite dimensional case, see \cite[Lemma IV.1.2]{iw}, we see that for every $k \in \CN$ the $k$-th component $\bw_k$ of $\bw$ is a Brownian motion with 
respect to $\FF$. 

As $\bw_k$ and $\bw_l$ are independent for $k\neq l$, we can define an $H$-cylindrical Wiener process with respect
to $\FF$ by setting, for $f \in L^2(0, \infty; H)$
\[
W_H(f) := \sum_{k=1}^\infty \int_0^\infty [f(t),h_k]_H\, d\bw_k (t) .
\]

We claim that $((\Omega, \Sigma, \FF, \PP), W_H, x_0 + \by_j)$ is a weak solution of equation \eqref{eq.sde} for $j=1,2$. We will write 
$\bx_j := x_0 + \by_j$ for $j=1,2$.
To prove the claim, let
$x^* \in D(A^*)$ be fixed. Using the measurability of $F$ and $G$, as well as the continuity of the functionals $x^*$ resp.\ $A^*x^*$, 
it follows from the definitions above that the joint distribution of 
\[
\big( \dual{X_j(0)}{x^*}, \dual{F(X_j(\cdot ))}{x^*}, \dual{X_j(\cdot)}{A^*x^*}, ([G(X_j(\cdot))^*x^*, h_k])_{k \in \CN}, (W_H^j(\cdot)h_k)_{k \in \CN}\big)
\]
under $\P_j$ is the same as that of 
\[
\big( \dual{x_0}{x^*}, \dual{F(\bx_j(\cdot))}{x^*}, \dual{\bx_j(\cdot) }{A^*x^*}, ([G(\bx_j (\cdot))^*x^*, h_k])_{k \in \CN}, (W_H(\cdot)h_k)_{k \in \CN}\big)
\]
under $\PP$. Thus, for fixed $t \geq 0$, we infer as in the finite dimensional situation that for $j=1,2$ and every $n \in \CN$ the distribution of
\[
\begin{aligned}
Z_{j,n}(t) := X_j (t) - & \dual{X_j (0)}{x^*} - \int_0^t \dual{X_j(s)}{A^*x^*}\, ds - \int_0^t \dual{F(X_j(s))}{x^*}\, ds\\
&  - \sum_{k=1}^n 
\int_0^t [G(X_j(s))^*x^*, h_k]\, dW_H^j(s)h_k
\end{aligned}
\]
under $\P_j$ is the same as that of 
\[
\begin{aligned}
\mathbf{z}_{j,n}(t) ;=\bx_j (t) - & \dual{\bx_j (0)}{x^*} - \int_0^t \dual{\bx_j(s)}{A^*x^*}\, ds - \int_0^t \dual{F(\bx_j(s))}{x^*}\, ds\\
&  - \sum_{k=1}^n 
\int_0^t [G(\bx_j(s))^*x^*, h_k]\, dW_H^j(s)h_k
\end{aligned}
\]
under $\PP$. Since $\bX_j$ is a solution of equation \eqref{eq.sde}, $Z_{j,n}(t) \to 0$ $\P_j$-almost surely as $n\to \infty$, hence $\mathbf{z}_{j,n}$ 
converges to $0$ in distribution and thus $\PP$-almost surely. Since $t \geq 0$ and $x^* \in D(A^*)$ were arbitrary, this proves that $\bx_j$ is indeed 
a weak solution.

As $x_0 + \by_1$ and $x_0 + \by_2$ are weak solutions defined on the same stochastic basis and with respect to the same 
$H$-cylindrical Wiener process, pathwise uniqueness implies that $x_0 + \by_1 = x_0 + \by_2$ $\PP$-almost surely. This, in turn, implies that 
the random elements $\bX_j$ have the same distribution.\medskip

As for the strong existence of solutions, define for $x_0 \in E$ and $\bw \in C([0, \infty); \CR^\infty)$ the measure 
$\mathbf{R}(x_0, \bw, \cdot)$ on the Borel $\sigma$-algebra $\mathscr{S}$ of $C([0, \infty); E) \times C([0, \infty); E)$ as the product of 
$\QQ_1(x_0, \bw, \cdot)$ and $\QQ_2(x_0, \bw, \cdot)$. Then, for $G \in \mathscr{S}$, $C \in \cB (E)$ and $D \in \cB (C([0, , \infty); \CR^\infty)$ we have
\[
\PP (G\times C \times D) = \int_{C\times D} \mathbf{R}(x_0, \bw, G)\, d(\mu\otimes \mathds{W})(x_0, \bw).
\]
Now consider $\Lambda := \{(\by_1, \by_2)\,:\, \by_1 = \by_2\}$. It follows from the  first part of the proof that
$R(x_0, \bw, \Lambda) =1$ for $(\mu\otimes\mathds{W})$-almost every $(x_0, \bw)$, say outside the set $N \in \cB (E)\otimes \cB (C([0, \infty); 
E))$ with $(\mu\otimes \mathds{W})(N) = 0$. Using Fubini's theorem, we find for $(x_0, \bw) \in N^c$
\[
1= \mathbf{R}(x_0, \bw, \Lambda) = \int_{C([0, \infty); E)} \QQ_1(x_0, \bw, \{\by\})\, \QQ_2(x_0, \bw, d\by).
\]
As all measures involved in this equation are probability measures, this can only happen if $\QQ_1(x_0, \bw, \{\by_0\})= \QQ_2(x_0, \bw, \{\by_0\})
=1$ for a certain $\by_0 = \Phi (x_0, \bw) \in C([0, \infty); E)$. 

A straightforward generalization of the proof in the finite-dimensional case, see \cite[Section 5.3.D]{ks}, shows that the map $\Phi : E \times C([0, \infty); \CR^\infty) \to C([0, \infty); E)$ is $\cB (E) \otimes \cB ( C([0, \infty); \CR^\infty))/ \cB ([0, \infty); E)$-measurable.
Moreover, if we define $\mathscr{H}_t$ as the augmentation of $\cB(E) \otimes \sigma (\bw (s): s\leq t)$ by the $\mu\otimes \mathds{W}$-null sets
and $\mathscr{I}_t := \sigma (\by (s): s \leq t)$, then $\Phi$ is $\mathscr{H}_t/\mathscr{I}_t$-measurable for every $t>0$. 

By what was done so far, $x_0 + \by_j = x_0 + \Phi (x_0, \bw)$ $\PP$-almost surly. Thus, for $j=1,2$, we have $\bX_j = X_j(0) + \Phi (X_j(0), 
(W_H^j(\cdot)h_n)_{n \in \CN})$ $\P_j$-almost surely. The measurability properties of $\Phi$ now imply that the solution 
$((\Omega_j, \Sigma_j, \FF, \PP_j),  W_H^j, \bX_j)$ exists strongly for $j=1,2$.
\end{proof}

As a consequence of pathwise uniqueness, we find solutions of equation \eqref{eq.sde} on a \emph{given} probability space 
and with respect to a \emph{given} $H$-cylindrical Wiener process.

\begin{cor}\label{c.strongexistence}
Assume that pathwise uniqueness holds for equation $[A, F, G]$ and that for some $\mu \in \cP (E)$, there exists a weak solution 
of $[A,F,G]$ with initial distribution $\mu$. Then, given any stochastic basis $(\Omega, \Sigma, \FF, \PP)$ on which an $H$-cylindrical 
Wiener process $W_H$ with respect to $\FF$ is defined and on which an $\cF_0$-measurable random variable $\xi$ with distribution 
$\mu$ is defined, there exists a process $\bX$ such that $((\Omega, \Sigma, \PP), \FF, W_H, \bX)$ is a weak solution of equation $[A, F, G]$
with $X(0) = \xi$.
\end{cor}

\begin{proof}
Let $((\Omega', \Sigma', \PP'), \FF', W_H', \bX')$ be a weak solution of $[A,F,G]$ with $X'(0) \sim \mu$. The proof of Theorem \ref{t.yw}
yields that $X' = X(0) + \Phi (X'(0), (W_H'(\cdot)h_n)_{n\in\CN})$. We put $X := \xi + \Phi (\xi, (W_H(\cdot)h_n)_{n \in \CN})$.

Then the distribution of $(X'(0), \bX', (W_H'(\cdot)h_n)_{n\in\CN})$ under $\PP'$ is the same as the distribution of 
$(\xi, \bX, (W_H(\cdot)h_n)_{n\in\CN})$ under $\PP$. Arguing as in the first part of the proof of Theorem \ref{t.yw}, it follows 
that $((\Omega, \Sigma, \PP), \FF, W_H, \bX)$ is a weak solution of equation $[A, F, G]$
with $X(0) = \xi$.
\end{proof}

\section{Stochastic integration and mild solutions}\label{sect.integration}

We now address the question whether weak solutions of \eqref{eq.sde} are also mild solutions, i.e.\ 
for all $t \geq 0$ the $\cL (H, E)$-valued process $s \mapsto S_{t-s}G(X_s)$ is stochastically integrable 
(in a sense to be made precise below) and we have,
almost surely, 
\begin{equation}\label{eq.evalued}
X_t = X_0 + \int_0^tS_{t-s}F(X_s)\, ds + \int_0^t S_{t-s}G(X_s)\, dW_H(s) \,. 
\end{equation}

Having mild solutions, rather than weak solutions, has many advantages. In particular, one can make use of the 
\emph{factorization method} \cite{dpkz}. The factorization method is useful to prove continuity of the paths of 
solutions which we have assumed throughout and also to establish the tightness assumption in Lemma \ref{l.approx}, 
thus enabling us to construct solutions to stochastic differential equations. 

In this section, we prove the equivalence of weak and mild solutions under additional assumptions 
on either equation $[A,F,G]$ or the state space $E$. As an intermediate step, we first consider \emph{weakly mild solutions}
in which we only require \eqref{eq.evalued} to hold when tested against functionals $x^* \in \tilde{E}^*$.

\subsection{Weakly mild solutions}\label{ssect.wms}

\begin{defn}\label{def.mild}
A tuple $\big( (\Omega, \Sigma, \FF, \P), W_H, \bX\big)$, where $(\Omega, \Sigma , \FF, \P)$ is stochastic basis satisfying the usual conditions, 
$W_H$ is an $H$-cylindrical Wiener process with respect to $\FF$ and
$\bX$ is a continuous, $\FF$-progressive, $E$-valued process is called a \emph{weakly mild solution} of \eqref{eq.sde} 
if for all $x^* \in \tilde{E}^*$ and $t \geq 0$ we have
\begin{equation}\label{eq.mildsolution}
\dual{X_t}{x^*}  =  \dual{S_tX_0}{x^*} + \int_0^t \dual{S_{t-s}F(X_s)}{x^*} \, ds
 +  \int_0^t G(X_s)^*S_{t-s}^*x^* dW_H(s) .
\end{equation}
$\P$-a.e.
\end{defn}

\begin{rem}\label{l.welldefined}
By our assumptions on the coefficients $A,F$ and $G$, the
Lebesgue-integral and the stochastic integral in \eqref{eq.mildsolution} are well-defined
for all $t\geq 0$ and $x^* \in E^*$.

Indeed, the map $(s,\omega ) \mapsto F(X(s,\omega ))$ is measurable as a composition of two measurable maps. 
Hence, it is the limit of a sequence of simple functions $f_n$ almost everywhere with respect to 
$ds\otimes \P$. Thus
\[ \dual{S(t-\cdot )F(X)}{x^*} = \lim \dual{f_n}{S(t-\cdot )^*x^*} \quad ds\otimes \P-a.e. \]
We have  $\dual{f_n}{S(t-\cdot )^*x^*} = \sum_{j=1}^{N_n} \one_{A_{jn}}\dual{x_{jn}}{S(t-\cdot )^*x^*}$
for certain measurable sets $A_{jn}$ and vectors $x_{jn} \in \tilde{E}$ and this is measurable since 
$s\mapsto \dual{x}{S(t-s)^*x^*}$ is continuous for all $x \in \tilde{E}$ and $x^* \in \tilde{E}^*$.
Hence $\dual{S(t-\cdot )F(X)}{x^*}$ is the limit of measurable functions $ds\otimes \P$ almost everywhere 
and thus measurable. In view of the continuity of the paths of $\bX$, the boundedness of $F$ on bounded sets
and the boundedness of $S$ on finite time intervals, it follows that for almost all $\omega$ the 
function $s \mapsto \dual{S(t-s)F(X(s,\omega ))}{x^*}$ is bounded, hence integrable. 

The stochastic integral can be dealt with similarly, using the series expansion
\[
  G(X(s,\omega))^*S(t-s)^*x^* = \sum_k \dual{G(X(s,\omega ))h_k}{ S(t-s)^*x^*}_H h_k
\]
where $(h_k)$ is a finite or countably infinite orthonormal basis of $H$. 
\end{rem}

We now prove that the notions `weak solution' and `weakly mild solution' are equivalent. 
Under additional assumptions which ensure that the stochastic convolution is well-defined,
variations of this result (for mild solutions) have been proved in various settings, 
see \cite[Theorem 5.4]{dpz}, \cite[Theorem 7.1]{vNW05} or \cite[Proposition 3.3]{sv10}. Assuming that $G$ is constant or 
that $E$ is a UMD Banach space, in the following 
subsection we prove that weakly mild solutions are mild solutions. In particular, it \emph{follows} that the stochastic 
convolution is well-defined.
\medskip

We note that the adjoint semigroup $S^*$ may not be strongly continuous, which causes technical difficulties.
To overcome these, we will use results about the \emph{$\odot$-dual semigroup} $S^\odot$. We recall some 
basic definitions and properties and refer the reader to \cite{vNadjoint} for more information.

Define $\tilde{E}^\odot := \overline{D(A^*)}$. Then $\tilde{E}^\odot$ is a closed, \ws-dense subspace
of $\tilde{E}^*$ which is invariant under the adjoint semigroup. The restriction of the adjoint semigroup to 
$\tilde{E}^\odot$, denoted
by $S^\odot$, is strongly continuous. In fact, $\tilde{E}^\odot = \{ x^* \in \tilde{E}^*\, : \, t \mapsto S(t)^*x^*\,\,
\mbox{is strongly continuous} \}$. We denote by $A^\odot$ the generator of $S^\odot$. Note that $A^\odot$ is
exactly the part of $A^*$ in $\tilde{E}^\odot$.

\begin{prop}\label{p.equivalence}
The weak and the weakly mild solutions of \eqref{eq.sde} coincide. 
\end{prop}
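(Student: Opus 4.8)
The plan is to prove the two implications separately, exploiting the smoothing effect of convolving against the semigroup to pass between the two formulations.

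First I would show that a weak solution is a weakly mild solution. Fix $x^\odot \in D(A^\odot) = D(A^*) \cap E^\odot$, so that $t \mapsto S(t)^*x^\odot$ is strongly continuously differentiable with derivative $S(t)^*A^\odot x^\odot = A^* S(t)^* x^\odot$. For a fixed terminal time $t$, apply the weak-solution identity \eqref{eq.sol} with the time-dependent test functional $s \mapsto S(t-s)^* x^\odot$. The clean way to do this is to apply the It\^o formula (as in the proof of Theorem \ref{t.weakmart}) to the real-valued process $s \mapsto \dual{X(s)}{S(t-s)^*x^\odot}$, whose finite-variation part picks up both the term coming from \eqref{eq.sol} tested against $S(t-s)^*x^\odot$ and the term $-\dual{X(s)}{A^*S(t-s)^*x^\odot}\,ds$ coming from differentiating the test functional; these two $\dual{X(s)}{A^*S(t-s)^*x^\odot}$-contributions cancel, leaving exactly \eqref{eq.mildsolution} for $x^* = x^\odot$. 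Since $D(A^\odot)$ is $\sigma(E^\odot, \tilde E)$-dense in $E^\odot$ and $E^\odot$ is \ws-dense in $\tilde E^*$, one then extends \eqref{eq.mildsolution} to all $x^* \in \tilde E^*$ by approximation, using that all three terms in \eqref{eq.mildsolution} are continuous in $x^*$ along suitable approximating sequences (the Lebesgue and stochastic integrals converge by dominated convergence, exactly as in Lemma \ref{l.welldefined}, once one has uniform bounds on $\|S(t-s)^*x_n^*\|$, which follow from the uniform boundedness principle applied to a \ws-convergent sequence). Here the $\odot$-dual machinery is what lets us differentiate $S(t-s)^*x^\odot$ in the strong sense and justify the cancellation rigorously despite $\bS^*$ not being strongly continuous.

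Conversely, suppose $\bX$ is a weakly mild solution; I would recover \eqref{eq.sol}. Fix $x^* \in D(A^*)$. The idea is to integrate the weakly mild identity in time after pairing appropriately: compute $\int_0^t \dual{X(s)}{A^*x^*}\,ds$ by substituting \eqref{eq.mildsolution} (with $x^*$ replaced by $A^*x^*$, which is legitimate since $A^*x^* \in \tilde E^*$) and using Fubini for the deterministic integrals and the stochastic Fubini theorem for the stochastic integral. For the deterministic pieces one uses $\int_0^t S(t-s) y \, ds = \int_0^t S(s) y\, ds \in D(A)$ with $A\int_0^t S(s)y\,ds = S(t)y - y$, equivalently $\int_0^t \dual{S(t-s)y}{A^*x^*}\,ds = \dual{S(t)y}{x^*} - \dual{y}{x^*}$ for $y \in \tilde E$; applying this with $y = X(0)$ and, inside a further time integral, with $y = F(X(r))$ reproduces $\dual{S(t)X(0)}{x^*} - \dual{X(0)}{x^*}$ and $\int_0^t \dual{S(t-r)F(X(r))}{x^*}\,dr - \int_0^t \dual{F(X(r))}{x^*}\,dr$. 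The stochastic term is handled the same way: $\int_0^t \int_0^s G(X(r))^* S(s-r)^* A^* x^*\, dW_H(r)\, ds = \int_0^t G(X(r))^*\big(S(t-r)^* x^* - x^*\big)\, dW_H(r)$, by the stochastic Fubini theorem combined with the identity $\int_r^t S(s-r)^* A^* x^*\, ds = S(t-r)^*x^* - x^*$ (valid for $x^* \in D(A^*)$, again justified via $\bS^\odot$ or by testing against $\tilde E$). Assembling these and using the weakly mild identity \eqref{eq.mildsolution} for $x^*$ itself to identify $\dual{S(t)X(0)}{x^*}$ and the convolution terms, everything telescopes to \eqref{eq.sol}.

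The main obstacle I anticipate is the bookkeeping around the non-strong-continuity of $\bS^*$: one must consistently work with the $\odot$-dual semigroup, being careful that $A^* x^*$ need not lie in $E^\odot$ even when $x^*$ does, and that the maps $s \mapsto S(t-s)^* x^*$ are only \ws-continuous in general, so measurability and integrability of the integrands in the Fubini steps need the kind of argument already used in Lemma \ref{l.welldefined} (approximating $F(X)$ and $G(X)$ by simple functions and exploiting that $s \mapsto \dual{y}{S(t-s)^*x^*}$ is continuous for fixed $y \in \tilde E$). The other technical point is verifying the hypotheses of the stochastic Fubini theorem — namely joint measurability and the requisite integrability of $(r,s,\omega) \mapsto \mathbf 1_{\{r \le s\}} G(X(r))^* S(s-r)^* A^* x^*$ on $[0,t]^2 \times \Omega$ — which again follows from the continuity of the paths of $\bX$, the boundedness of $F$ and $G$ on bounded sets, and the uniform boundedness of $\bS$ on $[0,t]$.
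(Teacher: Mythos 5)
Your overall route is the same as the paper's: for the forward direction, test against the time-dependent functionals $s\mapsto S(t-s)^*x^*$ built from the $\odot$-dual semigroup so that the $\dual{X(s)}{A^*S(t-s)^*x^*}$ terms cancel, then extend to all of $\tilde E^*$ by weak$^*$ approximation; for the converse, insert the weakly mild identity for $A^*x^*$, integrate in time, and use Fubini together with the stochastic Fubini theorem and $\int_0^t S(s)^*A^*x^*\,ds = S(t)^*x^*-x^*$. Your converse direction in fact matches the paper's proof step for step, including the measurability/null-set bookkeeping you anticipate.

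There is, however, a genuine gap in the extension step of the forward direction. You claim that along a weak$^*$-convergent sequence $x_n^*\weak^* x^*$ "the Lebesgue and stochastic integrals converge by dominated convergence". For the stochastic term this is not available: for fixed $(s,\omega)$ the $H$-valued integrands $G(X(s))^*S(t-s)^*x_n^*$ converge only \emph{weakly} in $H$ (coordinatewise against each $h\in H$), not in norm, so there is no pointwise convergence in $L^2(0,t;H)$ to dominate, and one cannot conclude convergence of the stochastic integrals this way. The paper deals with exactly this point by first localizing with the stopping times $\tau_n=\inf\{s>0:\|X(s)\|\geq n\}$ --- also needed because $G(X(\cdot))$ is only locally bounded, so without stopping the integrands need not lie in $L^2(\Omega;L^2(0,t;H))$ and the It\^o isometry is unavailable --- and then invoking the Hahn--Banach (Mazur) convex-combination argument from the proof of Lemma \ref{l.count}: the weak $L^2$-limit of the integrands lies in the strong closure of convex combinations of the tail, which produces functionals $y_m^*\weak^* x^*$ for which the stochastic integrals converge strongly in $L^2(\Omega)$ and, along a subsequence, almost surely; only then does dominated convergence handle the deterministic terms. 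Some such device (or an identification of both sides as weak $L^2(\Omega)$-limits) is indispensable. A smaller omission: applying It\^o's formula directly to $s\mapsto\dual{X(s)}{S(t-s)^*x^\odot}$ is not immediate, since \eqref{eq.sol} gives the semimartingale decomposition only for \emph{fixed} functionals; the paper justifies the time-dependent test functional by first proving \eqref{eq.var} for $\varphi=\sum_k f_k\otimes x_k^*$ and then a density argument in $C^1([0,t];D(A^\odot))$, which is also why it works with $x^*\in D((A^\odot)^2)$ so that $\varphi(s)=S(t-s)^*x^*$ is a $C^1$ curve in $D(A^\odot)$ with respect to the graph norm. Your sketch asserts this step rather than justifying it.
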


\begin{proof}
First assume that $\bX$ is a weak solution. For $n \in \CN$, define
 \[\tau_n :=  \inf \{ t > 0 \, : \norm{X(t)} \geq n \}\,.\] 
Since $\bX$ is a weak solution, we have for $x^* \in D(A^*)$ and $t\geq 0$
\[ 
\begin{aligned}
 \langle X_{t\wedge \tau_n} &, x^*\rangle =  \dual{X_{0\wedge \tau_n}}{x^*} +
\int_0^t \one_{[0,\tau_n]}(s)\dual{X_s}{A^*x^*}\, ds\\
& + \int_0^t\one_{[0,\tau_n]}(s)\dual{F(X_s)}{x^*}\, ds
+ \int_0^t \one_{[0,\tau_n]}(s)G(X_s)^*x^*\, dW_H(s)
\end{aligned}
\]
almost surely. In view of Remark \ref{rem1}, we may (and shall) assume that the exceptional set does not
depend on $t$. Below, we will suppress the statement $\P$-almost surely.

Fix $t >0$ and let $f \in C^1([0,t])$ and $x^* \in D(A^*)$. Putting $\varphi := f \otimes x^*$, 
It\^o's formula yields
\begin{equation}\label{eq.var}
\begin{aligned}
  \langle X_{t\wedge \tau_n },\varphi (t)\rangle & = \dual{X_{0\wedge \tau_n }}{\varphi (0)} + \int_0^{t} 
 \dual{X_{s\wedge \tau_n}}{\varphi'(s)}\, ds
 + \int_0^{t\wedge \tau_n} \dual{X_s}{A^*\varphi (s)}\, ds\\ 
 & \quad\quad+ \int_0^{t\wedge \tau_n} \dual{F(X_s)}{\varphi (s)}\, ds + \int_0^t 
\one_{[0,\tau_n]}(s) G(X_s)^*\varphi (s)\, dW_H(s)\,.
\end{aligned}
\end{equation}
By linearity, the above equation holds for $\varphi = \sum_{k=1}^N f_k\otimes x_k^*$ where 
$f_k \in C^1([0,t])$ and $x_k^* \in D(A^*)$. Since $D(A^\odot )$ is a Banach space with 
respect to the graph norm, so is $C^1([0,t];D(A^\odot ))$. 
Functions of the form $\varphi := \sum_{k=1}^n f_k \otimes x_k^*$
with $f_k \in C^1([0,t])$ and $x_k^*\in D(A^\odot)$ for $1\leq k \leq n$ are 
dense in $C^1([0,t]; D(A^\odot ))$ and hence an approximation argument shows that \eqref{eq.var} holds
for all $\varphi \in C^1([0,t]; D(A^\odot) )$.\medskip 

Now let $x^* \in D((A^\odot)^2)$ and $\varphi (s) = S_{t-s}^* x^*$. 
Then $\varphi \in C^1([0,t];D(A^\odot))$ with
$\varphi' (s) = - S_{t-s}^* A^*x^*$. Let us note that $\int_0^t \dual{X_{s\wedge \tau_n}}{\varphi'(s)}\, ds
= \int_0^{t\wedge \tau_n} \dual{X_s}{\varphi'(s)}\, ds + \int_{t\wedge \tau_n}^t \dual{X_{\tau_n}}{\varphi'(s)}\, ds$, where  the last term is zero if $\tau_n \geq t$. Thus equation
\eqref{eq.var} yields for this $\varphi$
\begin{equation}\label{eq.solmild}
\begin{aligned}
 & \dual{X_{t\wedge \tau_n }}{x^*} =  \dual{S_tX_{0\wedge \tau_n }}{x^*} + \int_0^{t\wedge\tau_n} 
\dual{S_{t-s}F(X_s)}{x^*}\, ds\\
& \quad \quad  - \int_{t\wedge \tau_n}^t \dual{S_{t-s}X_{\tau_n}}{A^*x^*}\, ds + \int_0^t \one_{[0,\tau_n]} 
G(X_s)^*S_{t-s}^* x^*\, dW_H(s)\,.
\end{aligned}
\end{equation}

We next want to extend \eqref{eq.solmild} to arbitrary $x^* \in \tilde{E}^*$. Obviously, the term $\int_{t\wedge \tau_n}^t \dual{S_{t-s}X_{\tau_n}}{A^*x^*}$
is not well-defined for arbitrary $x^* \in \tilde{E}^*$. However, using the well-known fact that for $0 \leq a < b$ and $x \in \tilde{E}$ the 
integral $\int_a^b S(s)x\, ds$ belongs to the domain  of the generator $A$ and $A\int_a^b S(s)x\, ds = S(b)x - S(a)x$, it follows that 
\[
\int_{t\wedge \tau_n}^t \dual{S_{t-s}X_{\tau_n}}{A^*x^*}\, ds = \dual{ S_{t-t\wedge \tau_n}X_{\tau_n} - X_{\tau_n}}{x^*}.
\]
Since $D((A^\odot )^2)$ is sequentially
\ws-dense in $\tilde{E}^*$, given $z^* \in \tilde{E}^*$, we find a sequence $x_k^* \in D((A^\odot)^2)$ such that
$x_k^* \weak^* z^*$. Arguing similar as in the proof of Lemma \ref{l.count}, we find a sequence
$y_m^*$ in the convex hull of the $(x_k^*)$ such that $y_m^* \weak^* z^*$ and 
\[ \one_{[0,\tau_n]}(\cdot ) G(X(\cdot ))^*S(t-\cdot )^*y_m^* \to \one_{[0,\tau_n]}
(\cdot )G(X(\cdot ))^*S(t-\cdot )^*z^* \]
in $L^2(\Omega)$. Thus, since $\expect \big|\int_0^t \Phi (s)\, dW_H(s)\big|^2 = 
\|\Phi\|_{L^2(\Omega;L^2([0,t];H))}^2$ we see that
\[ \int_0^t \one_{[0,\tau_n]}G(X(s ))^*S(t-s )^*y_m^*\, dW_H(s) \to 
 \int_0^t \one_{[0,\tau_n]}G(X(s ))^*S(t-s )^*z^*\, dW_H(s)
\]
in $L^2(\Omega ; L^2(0,t;H))$. Passing to a subsequence, we may assume that we have convergence almost everywhere. 
Moreover, since \eqref{eq.solmild} also holds for $x^*=y_m^*$, for all 
$m \in \CN$, noting that
\[
\one_{[0,\tau_n]}(s)\big| \dual{S(t-s)F(X(s))}{y_m^*} \big|\leq \one_{[0,\tau_n]}(s)Me^{\omega (t-s)} B_n
\cdot \sup_{m\in\CN} \norm{y_m^*},
\]
where $M$ and $\omega$ are such that $\|S(t)\|\leq Me^{\omega t}$ for $t \geq 0$ and 
$B_n :=\sup\{\norm{F (x)} \, : \norm{x} \leq n\}$, is follows from
dominated convergence that $\int_0^{t\wedge \tau_n} \dual{S_{t-s}F(X_s)}{y_m^*}\, ds$ converges to 
$\int_0^{t\wedge \tau_n} \dual{S_{t-s}F(X_s)}{z^*}\, ds$ almost surely. It altogether we see that
\begin{equation}\label{eq.stoppedmildsol}
\begin{aligned}
 & \dual{X_{t\wedge \tau_n }}{z^*} =  \dual{S_tX_{0\wedge \tau_n }}{z^*} + \int_0^{t\wedge\tau_n} 
\dual{S_{t-s}F(X_s)}{z^*}\, ds\\
& \quad + \dual{ X_{\tau_n}-S_{t-t\wedge \tau_n}X_{\tau_n} }{z^*} + \int_0^t \one_{[0,\tau_n]} 
G(X_s)^*S_{t-s}^* z^*\, dW_H(s)\,.
\end{aligned}
\end{equation}
Upon letting $n \to \infty$, \eqref{eq.mildsolution} is proved for arbitrary $x^* = z^*$.\medskip

We now prove the converse and assume that $\bX$ is a weakly mild solution of \eqref{eq.sol}.
Fix $x^* \in D(A^*)$ and $t >0$. Then for $0<s<t$ we have
\begin{equation}\label{eq.mildsol}
\begin{aligned}
 \dual{X_s}{A^*x^*} & = \dual{S_sX_0}{A^*x^*} +  \int_0^s \dual{S_{s-r}F(X_r)}{A^*x^*}\, dr\\
& \quad + \int_0^s G(X_r)^*S_{s-r}^*A^*x^*\, dW_H(r)
\end{aligned}
\end{equation}
almost surely. We note that the exceptional set may depend $s$. However, all terms in this equation are 
jointly measurable in $s$ and $\omega$. Hence, the left-hand side and the right-hand side of \eqref{eq.mildsol}
are equal as elements of $L^0((0,t); L^0(\Omega ))$. By the canonical isomorphism $L^0((0,t); L^0(\Omega ))
\simeq L^0(\Omega; L^0(0,t))$, there exists a set $N \subset \Omega$ with $\P(N) = 0$ such that 
outside $N$ equation \eqref{eq.mildsol} holds as an equation in $L^0(0,t)$, i.e.\ for almost every $s \in (0,t)$,
where the exceptional set may depend on $\omega$.  Next note that by the continuity of the paths, the local boundedness
of $S$ and the boundedness of $F$ on bounded sets, the first three terms are, as functions of $s$, $\P$-almost surely 
bounded on $(0,t)$ and hence belong to $L^1(0,t)$. Possibly enlarging $N$, we may 
assume that outside $N$ equation \eqref{eq.mildsol} holds as an equation in $L^1(0,t)$.
Integrating from $0$ to $t$, we find that, $\P$-almost surely, we have
\begin{equation}\label{eq.integratedsol}
\begin{aligned}
  \int_0^t \dual{X_s}{A^*x^*}\, ds =  &
\int_0^t \dual{S_sX_0}{A^*x^*}\, ds
 + \int_0^t\int_0^s\dual{S_{s-r}F(X_r)}{A^*x^*}
\, dr\, ds\\
& \quad + \int_0^t \int_0^s G(X_r)^*x^*S_{s-r}^*A^*x^*\, dW_H(r)\, ds \,.
\end{aligned}
\end{equation}
Recall that for $x^* \in D(A^*)$ we have $\int_0^t S(s)^*A^*x^*\, ds = S(t)^*x^* - x^*$ for all $t\geq 0$. Here,
the integral has to be understood as \ws-integral. Using this, we obtain, pathwise, 
\[  \int_0^t\dual{S_sX_0}{A^*x^*}\, ds= \Big\langle X_0, \int_0^tS_s^*A^*x^*\, ds\Big\rangle
 =  \dual{X_0}{S_t^*x^* - x^*}
= \dual{S_tX_0-X_0}{x^*}.
\]
Using Fubini's theorem, we have  
\[ \begin{aligned}
\int_0^t\int_0^s &\dual{S_{s-r}F(X_r)}{A^*x^*}\, dr\,ds = 
\int_0^t\Big\langle F(X_r), \int_r^t S_{s-r}^*A^*x^*\Big\rangle\, ds\, dr\\
& =  \int_0^t\dual{S_{t-r}F(X_r)}{x^*}\, dr - \int_0^t \dual{F(X_r)}{x^*}\, dr 
\end{aligned}
\]
pathwise. Using the stochastic Fubini theorem \cite[Theorem 3.5]{vNV06}, it follows that
\[ 
 \begin{aligned}  
\int_0^t\int_0^s  & G(X_r)^*S_{s-r}^*A^*x^*\, dW_H(r)\, ds
= \int_0^t\int_r^t G(X_r)^*S_{s-r}^*A^*x^*\, ds \, dW_H(r)\\
& =   \int_0^t G(X_r)^*S_{t-r}^*x^* \, dW_H(r) - \int_0^t G(X_r)^*x^*\, dW_H(r) 
 \end{aligned}
\]
$\P$-almost surely.\smallskip
 
Plugging these three identities into \eqref{eq.integratedsol} and using that $\bX$ is a 
mild solution, \eqref{eq.sol} follows.
\end{proof}

Since all terms appearing in \eqref{eq.sol} are almost surely continuous, there is no problem 
in writing an equation for the stopped process $\dual{X_{t\wedge \tau}}{x^*}$ and we did this in 
the proof of Proposition \ref{p.equivalence}. On the other hand, for weakly mild solutions, the 
integrand in the stochastic integral changes with $t$, causing problems to obtain an equation 
for the stopped process. In \cite[Appendix]{bms05}, this problem was solved under the assumption 
that the stochastic convolution is almost surely continuous. In the proof of Proposition 
\ref{p.equivalence}, we have shown that for a weak solution, \eqref{eq.stoppedmildsol} holds for all $x^* \in \tilde{E}^*$. 
Given a stopping time $\tau$, we can repeat the arguments with $\tau_n$ replaced with $\tau_n\wedge \tau$ to obtain

\begin{cor}\label{c.stopped}
If $\bX$ is a weak (equivalently, weakly mild) solution of \eqref{eq.sde} and $\tau$ is a stopping time, 
then for all $t \geq 0$ and $x^* \in \tilde{E}^*$ we have
\begin{equation}\label{eq.stopped}
 \begin{aligned}
 \dual{X_{t\wedge \tau }}{x^*} &= \dual{S_t X_{0\wedge \tau }}{x^*} + \int_0^{t\wedge \tau} \dual{S_{t-s}F(X_s)}{x^*}\, ds\\
&   + \dual{X_\tau - S_{t-t\wedge \tau}X_\tau}{x^*}\one_{\{ \tau < \infty\}} + \int_0^t \one_{[0,\tau]}(s)
G(X_s)^*S_{t-s}^* x^*\, dW_H(s)\,.
\end{aligned}
\end{equation}
almost surely. 
\end{cor}

The question arises whether \eqref{eq.mildsolution} can be extended to hold for all $x^* 
\in E^*$. This is indeed the case under the following additional assumption.

\begin{hyp}\label{hyp2}
Assume Hypothesis \ref{hyp1}, that $S(t) \subset \cL (\tilde{E}, E)$ for all $t >0$ and that for 
$x \in \tilde{E}$ the $E$-valued map $t \mapsto S(t)x$ is continuous on $(0,\infty )$. 
Furthermore, assume that for all $t>0$ the function
$(0,t)\ni s \mapsto \| S(s)\|_{\cL (\tilde{E}, E)}$ is square integrable.
\end{hyp}

Assuming Hypothesis \ref{hyp2}, a slight variation of the arguments in Remark \ref{l.welldefined}
shows that in this case the integrals in \eqref{eq.mildsolution} are well-defined for $x^* \in E^*$.

\begin{cor}\label{c.extend}
Assume that Hypothesis \ref{hyp2} holds. If $\bX$ is a weak (equivalently, weakly mild) solution of \eqref{eq.sde},
then \eqref{eq.mildsolution} and \eqref{eq.stopped} hold for all $x^* \in E^*$.
\end{cor}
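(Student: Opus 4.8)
The plan is to extend the identities \eqref{eq.mildsolution} and \eqref{eq.stopped}, which by Corollary~\ref{c.stopped} are known to hold for every $x^*\in\tilde{E}^*$, from the dense subspace $\tilde{E}^*$ to the larger space $E^*$ by a density-and-continuity argument, exactly in the spirit of the last part of the proof of Proposition~\ref{p.equivalence}. The key new ingredient supplied by Hypothesis~\ref{hyp2} is that $S(t-s)$ now maps $\tilde{E}$ into $E$ for $s<t$, so that $S(t-s)^*$ is defined on $E^*$ and the three integrals in \eqref{eq.mildsolution} make sense for $x^*\in E^*$; this is the content of the ``slight variation of Lemma~\ref{l.welldefined}'' remarked upon just before the corollary, and I would take it as given.

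First I would fix $t>0$, a stopping time $\tau$, and an arbitrary $z^*\in E^*$. Since $\tilde{E}^*$ is \ws-dense in $E^*$ (indeed $E\hookrightarrow\tilde{E}$ densely, so the adjoint $\tilde{E}^*\to E^*$ has dense range, and on the unit ball this can be arranged as a \ws-convergent sequence because $E$ is separable), choose $x_k^*\in\tilde{E}^*$ with $x_k^*\weak^* z^*$ in $E^*$; by the uniform boundedness principle $\sup_k\norm{x_k^*}_{E^*}<\infty$. Next, as in the proof of Lemma~\ref{l.count} and the end of Proposition~\ref{p.equivalence}, I would pass to a sequence $y_m^*$ in the convex hull of the $x_k^*$ — still \ws-convergent to $z^*$ and still norm-bounded — achieving \emph{strong} convergence
\[
\one_{[0,\tau]}(\cdot)\,G(X(\cdot))^*S(t-\cdot)^*y_m^* \to \one_{[0,\tau]}(\cdot)\,G(X(\cdot))^*S(t-\cdot)^*z^*
\]
in $L^2(\Omega;L^2(0,t;H))$, hence (after a subsequence) pointwise a.e., and hence convergence of the corresponding stochastic integrals via the It\^o isometry. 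The deterministic terms are handled by dominated convergence: $\dual{S(t)X(0\wedge\tau)}{y_m^*}\to\dual{S(t)X(0\wedge\tau)}{z^*}$ pointwise, and for the Lebesgue integral one bounds
\[
\one_{[0,\tau]}(s)\bigl|\dual{S(t-s)F(X(s))}{y_m^*}\bigr|\le \one_{[0,\tau]}(s)\,\|S(t-s)\|_{\cL(\tilde{E},E)}\,B\cdot\sup_m\norm{y_m^*}_{E^*},
\]
where $B$ bounds $\norm{F(X(s))}$ over $s\le t$ on the relevant path and $\|S(t-\cdot)\|_{\cL(\tilde{E},E)}$ is square (hence, on $[0,t]$, also once-) integrable by Hypothesis~\ref{hyp2}; this dominating function is independent of $m$, so dominated convergence applies. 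Passing to the limit in \eqref{eq.stopped} written for $x^*=y_m^*$ then yields \eqref{eq.stopped} for $x^*=z^*$, and taking $\tau$ deterministic (or letting $\tau\uparrow\infty$) gives \eqref{eq.mildsolution}.

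The main obstacle I anticipate is the convex-combination step producing the $y_m^*$: weak-star convergence of $x_k^*$ in $E^*$ only gives weak (not strong) convergence of $G(X(\cdot))^*S(t-\cdot)^*x_k^*$ in $L^2(\Omega;L^2(0,t;H))$ — indeed the quadratic dependence on $x^*$ inside $\norm{G(\cdot)^*\cdot}_H^2$ obstructs a direct pointwise limit, which is precisely why the Hahn--Banach/Mazur trick was needed in Lemma~\ref{l.count}. One must check that this passage to convex combinations is still legitimate here, i.e.\ that $\one_{[0,\tau]}(\cdot)G(X(\cdot))^*S(t-\cdot)^*z^*$ lies in the weak closure of the tails $\bigl(\one_{[0,\tau]}(\cdot)G(X(\cdot))^*S(t-\cdot)^*x_k^*\bigr)_{k\ge N}$; this follows because for each fixed $(s,\omega)$ one has $S(t-s)^*x_k^*\to S(t-s)^*z^*$ \ws\ in $\tilde{E}^*$ (as $S(t-s)^*$ maps $E^*$ into $\tilde E^*$ and is \ws-to-\ws\ continuous), whence $G(X(s,\omega))^*S(t-s)^*x_k^*\to G(X(s,\omega))^*S(t-s)^*z^*$ weakly in $H$, combined with the uniform $L^2$-bound (from the boundedness of $G$ on bounded sets, the square-integrability of $\|S(t-\cdot)\|_{\cL(\tilde{E},E)}$, and $\sup_k\norm{x_k^*}_{E^*}<\infty$) and a dominated-convergence argument to upgrade pointwise weak convergence to weak convergence in $L^2(\Omega;L^2(0,t;H))$. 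Once that is in place, the rest of the argument is the routine approximation already rehearsed in the paper, and I would not belabor it.
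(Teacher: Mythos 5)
There is a genuine gap at the very first step: you \emph{assume} that every $z^*\in E^*$ can be reached as the \ws-limit of a \emph{sequence} $(x_k^*)\subset\tilde{E}^*$ (viewed in $E^*$ via the adjoint of the embedding $E\hookrightarrow\tilde{E}$), justifying this by \ws-density of that subspace plus separability of $E$. But \ws-density of a subspace only yields approximation by \emph{nets}; to get a \ws-convergent sequence one needs (by the uniform boundedness principle, which you yourself invoke) a \emph{norm-bounded} approximating family, i.e.\ one needs $z^*$ to lie in the \ws-closure of some ball of the image of $\tilde{E}^*$. That is exactly the statement that this image is a norming subspace of $E^*$ (has positive characteristic), which is not among the hypotheses and does not follow from \ws-density: total subspaces of duals of separable Banach spaces may have \ws-sequential closure strictly smaller than $E^*$. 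Metrizability of the \ws-topology on bounded sets does not help until boundedness of the approximants is secured, and nothing in Hypotheses \ref{hyp1}--\ref{hyp2} secures it. The paper's proof is built precisely to avoid this issue: it defines $V:=\{x^*\in E^*:\eqref{eq.mildsolution}\ \text{holds a.e.}\}$, notes $V\supset\tilde{E}^*$ so $V$ is a \ws-dense subspace, and then proves that $V$ is \ws-\emph{closed} via the Krein--Smulyan theorem, which reduces the matter to showing that the unit ball $B_V$ is sequentially \ws-closed (legitimate because the \ws-topology is metrizable on bounded sets). In that reduction the approximating sequences are bounded sequences taken in $V$ itself, where the identity is already known, so no bounded approximation of $z^*$ from $\tilde{E}^*$ is ever needed; the limit passage is then carried out by the same Mazur/It\^o-isometry/dominated-convergence machinery you describe. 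Your convergence analysis is thus essentially the paper's, but the reduction that makes a sequential argument admissible is missing, and as stated your opening step is unjustified.

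A secondary, fixable point: for the Mazur step you need the processes $\one_{[0,\tau]}(\cdot)G(X(\cdot))^*S(t-\cdot)^*x_k^*$ to lie in $L^2(\Omega;L^2(0,t;H))$ with a uniform bound, but for a general stopping time $\tau$ (or for $\tau=\infty$ in \eqref{eq.mildsolution}) the paths of $\bX$ are not uniformly bounded in $\omega$, so ``boundedness of $G$ on bounded sets'' does not give an $L^2(\Omega;\cdot)$ bound. One should first work with $\tau\wedge\tau_m$, where $\tau_m:=\inf\{s>0:\norm{X(s)}\geq m\}$, exactly as in the proof of Proposition \ref{p.equivalence}, and let $m\to\infty$ at the end.
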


\begin{proof}
Define
\[ V := \{ x^* \in E^*\,:\,\eqref{eq.mildsolution} \,\, \mbox{holds a.e.} \,\}\,. \] 
By Proposition \ref{p.equivalence}, $\tilde{E}^* \subset V$ and hence $V$ is \ws-dense in $E^*$. 
The claim is proved once we show that $V$ is \ws-closed in $E^*$. By the Krein-Smulyan
theorem (see, e.g., \S 21.10 (6) of \cite{koethe}), $V$ is \ws-closed in $E^*$ if and only if 
 $B_{V} := \{ x^* \in V\,: \, \|x^*\|_{E^*} \leq 1\}$ is \ws-closed in $E^*$.
However, since the \ws-topology is metrizable on bounded sets, it suffices to prove that
$B_{V}$ is sequentially \ws-closed. 

Using Hypothesis \ref{hyp2}, this can be proved similarly as when extending 
equation \eqref{eq.solmild} from $x^* \in D((A^\odot )^2)$ to arbitrary $x^* \in \tilde{E}^*$ in 
the proof of Proposition \ref{p.equivalence}. The proof for \eqref{eq.stopped} is similar.
\end{proof}

\subsection{Mild solutions}\label{ssect.ms}
We begin by recalling some facts about stochastic integration of operator-valued processes. For time being, 
$B$ denotes a general separable Banach space and $H$ a separable Hilbert space. We also fix a stochastic basis 
$(\Omega, \Sigma, \FF, \P)$ satisfying the usual condition on which an $H$-cylindrical Wiener process with respect to 
$\FF$ is defined.

An \emph{elementary process} is a process $\Phi : [0,T]\times \Omega \to \cL (H,B)$ of the form
\[ \Phi (t,\omega ) = \sum_{n=1}^N\sum_{m=1}^M\one_{(t_{n-1}, t_n]\times A_{mn}}(t,\omega )\sum_{k=1}^K
 h_k \otimes x_{kmn} \,,
\]
where $0\leq t_0 < \cdots < t_N\leq T$, $A_{1n}, \cdots, A_{Mn} \in \cF_{t_{n-1}}$ are disjoint
for all $n$ and the vectors $h_1, \cdots, h_K$ are orthonormal in $H$. If $\Phi$ does not 
depend on $\omega$ we also say that $\Phi$ is an \emph{elementary function}.
For an elementary process, the 
\emph{stochastic integral} $\int_0^T\Phi (t)\, dW_H(t)$ is defined by
\[  \int_0^T\Phi (t) \, dW_H(t):= \sum_{n=1}^N\sum_{m=1}^M \one_{A_{mn}}
 \sum_{k=1}^K \big[W_H(t_n )h_k - W_H(t_{n-1})h_k\big]x_{kmn}
\]

Now let $\Phi : [0,T]\times \Omega \to \cL (H, B)$ be an $H$-strongly measurable and adapted process
which belongs to $L^2(0,T;H)$ scalarly, i.e.\ $\Phi^*x^* \in L^0(\Omega; L^2(0,T;H))$ for all $x^* \in B^*$.
Then $\Phi$ is called \emph{stochastically integrable} (on $(0,T)$) if there exists a sequence $\Phi_n$ of elementary
processes and an $C([0,T]; E)$-valued random variable $\eta$ such that
\begin{enumerate}
 \item $\dual{\Phi_n h}{x^*} \to \dual{\Phi h}{x^*}$ in $L^0(\Omega; L^2(0,T))$ for all $h \in H$ and
$x^*\in B^*$ and
 \item We have
\[ \eta (\cdot )= \lim_{n\to\infty} \int_0^\cdot \Phi_n(t)\, dW_H(t)\quad\mbox{in}\,\, L^0(\Omega;C([0,T];B))\,.\]
\end{enumerate}
In this case, $\eta$ is called the stochastic integral of $\Phi$ and we write
 $\int_0^t\Phi (t)\, dW_H(t) := \eta (t)$.
In the case where $\Phi$ does not depend on $\omega$, we also require that the approximating sequence $\Phi_n$ does
not depend on $\omega$.

Having defined stochastic integrability, we can now define what we mean by a \emph{mild solution}.
\begin{defn}
A tuple $((\Omega, \Sigma, \FF, \P),  W_H, \bX)$ where $(\Omega, \Sigma, \FF, \P)$ is stochastic basis satisfying the usual conditions,
$W_H$ is an $H$-cylindrical Wiener process with respect to $\FF$ and
$\bX$ is a continuous, $\FF$-progressive, $E$-valued process is called a \emph{mild solution} of \eqref{eq.sde} 
if for all $t \geq 0$ the function $s \mapsto S(t-s)G(X(s))$ is stochastically integrable and \eqref{eq.evalued}
holds almost surely.
\end{defn}
It is clear from the definition of stochastic integrability, that every mild solution of equation $[A,F,G]$ 
is also a weakly mild solution of $[A,F,G]$ and thus, by Proposition \ref{p.equivalence}, also a weak solution of $[A,F,G]$. 
Moreover, if $\bX$ is a mild solution, then \eqref{eq.mildsolution} even holds for all $x^* \in E^*$ 
(rather than for $x^* \in 
\tilde{E}^*$) and the exceptional set outside of which \eqref{eq.mildsolution} holds can be chosen independently of 
$x^*$. We also note that if $\bX$ is a weak (hence a weakly mild) solution and it is known a priori that $s \mapsto S(t-s)G(X(s))$ is stochastically 
integrable, then $\bX$ is a mild solution. 
\medskip 

The obvious question is whether for a weak solution $\bX$ the process $s \mapsto S_{t-s}G(X_s)$ is automatically stochastically integrable.
As we shall see, this is indeed the case in two important cases. The proof relies on a characterization of stochastic integrability of 
a process $\Phi$. Let us 
first discuss the case of $\cL (H,B)$-valued \emph{functions}, which was considered in \cite{vNW05}.
It was proved there that a function $\Phi : [0,T] \to \cL (H,B)$ is stochastically integrable if and only if
there exists an $B$-valued random variable $\xi$ such that
\begin{equation}\label{eq.xi}
 \dual{\xi}{x^*} = \int_0^T\Phi(s)^*x^* \, dW_H(s) .
\end{equation}
This, in turn, is equivalent with $\Phi$ representing a $\gamma$-Radonifying operator $R \in \gamma (L^2(0,T;H),B)$. 
For the definition of $\gamma$-Radonifying operators and more information, we refer to the survey article 
\cite{vNsurvey}. That
$\Phi$ \emph{represents} an operator $R \in \gamma (L^2(0,T;H),B)$ means that for all $x^* \in B^*$
the function $t \mapsto \Phi^*(t) x^*$ belongs to $L^2(0,T;H)$ and we have
\begin{equation}\label{eq.represent}
 \dual{Rf}{x^*} = \int_0^T[f(t), \Phi^*(t)x^*]_H\, dt \quad \forall\, f \in L^2(0,t;H)\,, \, x^* \in B^*.
\end{equation}
Note that if $\Phi$ is $H$-strongly measurable, then the operator $R$ is uniquely determined by $\Phi$.

Using the results of \cite{vNW05}, we obtain for \eqref{eq.sde} with additive noise:

\begin{prop}\label{p.additivecase}
Assume Hypotheses \ref{hyp1} and \ref{hyp2} and that $G \in \cL(H, \tilde{E})$ is constant.
Then the weak, the weakly mild and the mild solutions of \eqref{eq.sde} coincide. Furthermore, 
if there exist solutions,
the function $s \mapsto S_{t-s}G$ represents an element of $\gamma (L^2(0,t;H),E)$ for all $t>0$.
\end{prop}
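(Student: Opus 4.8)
The plan is to observe that the only genuinely new assertion is that a weakly mild solution of \eqref{eq.sde} is automatically a mild solution; once this is shown (together with the $\gamma$-radonifying statement it produces), the rest is in hand. Indeed, by Proposition \ref{p.equivalence} the weak and the weakly mild solutions of \eqref{eq.sde} coincide, and it has already been noted that every mild solution is a weakly mild solution. So it suffices to take a weakly mild solution $\bX$ and a fixed $t>0$ and exhibit $\bX$ as a mild solution. (If no solution exists, all three classes are empty and there is nothing to prove.)

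First I would handle the deterministic part. Under Hypothesis \ref{hyp2} we have $S(t)\in\cL(\tilde{E},E)$, so $S(t)X(0)\in E$, and since $F$ is bounded on bounded sets, $\bX$ has continuous paths, and $s\mapsto\|S(s)\|_{\cL(\tilde{E},E)}$ is square-integrable (hence integrable) on $(0,t)$, the Bochner integral $\int_0^t S(t-s)F(X(s))\,ds$ exists as an $E$-valued random variable, path by path. Consequently
\[ \xi := X(t) - S(t)X(0) - \int_0^t S(t-s)F(X(s))\,ds \]
is a well-defined $E$-valued random variable.

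The crux is the stochastic term. By Corollary \ref{c.extend} --- this is where Hypothesis \ref{hyp2} is essential --- equation \eqref{eq.mildsolution} holds for \emph{every} $x^*\in E^*$, and since $G$ is constant it reads $\dual{\xi}{x^*} = \int_0^t G^*S(t-s)^*x^*\,dW_H(s)$ for all $x^*\in E^*$. Now consider $\Phi(s):=S(t-s)G$ for $s\in(0,t)$: it takes values in $\cL(H,E)$ (again by Hypothesis \ref{hyp2}), is $H$-strongly measurable because $s\mapsto S(t-s)Gh$ is continuous on $(0,t)$ for each $h\in H$, and satisfies $\|\Phi(s)\|_{\cL(H,E)}\le\|S(t-s)\|_{\cL(\tilde{E},E)}\|G\|_{\cL(H,\tilde{E})}$, which is square-integrable on $(0,t)$; in particular $\Phi^*x^*\in L^2(0,t;H)$ for every $x^*\in E^*$. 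Thus $\Phi$ and $\xi$ are exactly in the situation of \eqref{eq.xi}, so the characterization of stochastic integrability for $\cL(H,E)$-valued functions from \cite{vNW05} applies: $\Phi$ is stochastically integrable, represents an element of $\gamma(L^2(0,t;H),E)$, and $\xi=\int_0^t S(t-s)G\,dW_H(s)$.

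Combining the two parts yields, for each $t>0$,
\[ X(t) = S(t)X(0) + \int_0^t S(t-s)F(X(s))\,ds + \int_0^t S(t-s)G\,dW_H(s)\qquad\text{almost surely,} \]
which is \eqref{eq.evalued} with the constant coefficient $G$; hence $\bX$ is a mild solution, and the $\gamma$-radonifying assertion has been proved along the way. I expect the main obstacle to be the passage from testing against $\tilde{E}^*$ to testing against all of $E^*$, i.e.\ identifying $\xi$ as an \emph{$E$-valued} random variable satisfying \eqref{eq.xi} against every $x^*\in E^*$: this is exactly the content of Corollary \ref{c.extend} (and hence of Hypothesis \ref{hyp2}), and after that the function-valued integration theory of \cite{vNW05} finishes the argument --- crucially, this theory is available here only because, $G$ being constant, the relevant integrand $S(t-\cdot)G$ is a deterministic function rather than a genuine process.
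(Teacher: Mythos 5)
Your proposal is correct and follows essentially the same route as the paper: define $\xi := X(t)-S(t)X(0)-\int_0^t S(t-s)F(X(s))\,ds$ as an $E$-valued random variable via a pathwise Bochner integral (using Hypothesis \ref{hyp2} and the boundedness of $F$ on bounded sets), invoke Corollary \ref{c.extend} to get \eqref{eq.xi} for all $x^*\in E^*$, and then conclude by the characterization of stochastic integrability for $\cL(H,E)$-valued \emph{functions} in \cite{vNW05}, which applies precisely because $G$ is constant. The only detail the paper spells out that you pass over lightly is the strong measurability of the Bochner integral in $\omega$ (via the Pettis measurability theorem), which is routine.
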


\begin{proof}
Let $\bX$ be a weak (equivalently, a weakly mild) solution of \eqref{eq.sde}. If no such solution 
exists, there is nothing to prove since every mild solution is also a weakly mild solution.

Arguing as Remark \ref{l.welldefined}, using that as a consequence of Hypothesis \ref{hyp2} 
the map $s \mapsto \dual{x}{S_{t-s}^*x^*}$ is continuous even for $x^* \in E^*$ and $x \in \tilde{E}$, 
we see that $(s,\omega ) \mapsto \dual{S(t-s)F(X(s,\omega ))}{x^*}$
is measurable for all $x^* \in E^*$. By Hypothesis \ref{hyp2}, $\norm{S_s}_{\cL (\tilde{E}, E)}$ is majorized
on $(0,t)$ by a square integrable function, say $g$. Hence, by the boundedness of $F$ on bounded sets we have
\[ \norm{S_{t-s}F(X(s,\omega ))} \leq g(t-s)\sup_{r \in (0,t)}\norm{F(X(r,\omega)} \in L^1(0,t) .\]
This implies that $\int_0^t S_{t-s}F(X_s)\, ds$ can be defined pathwise as an $E$-valued Bochner integral. 
Furthermore, this integral is a weakly measurable function of $\omega$. 
Since $E$ is separable, $\int_0^t S_{t-s}F(X_s)\, ds$ is a strongly measurable function of $\omega$ by 
the Pettis measurability theorem. Consequently, $\xi := X_t - S_tX_0 - \int_0^t S_{t-s}F(X_s)\, ds$ is 
an $E$-valued random variable. Since $\bX$ is a weakly mild solution, \eqref{eq.xi} holds 
for $T := t, \Phi : s \mapsto S_{t-s}G$ and all  $x^* \in E^*$ by Corollary \ref{c.extend}. 
The claim follows from the results of \cite{vNW05}.
\end{proof}

Let us now return to our discussion of stochastic integrability in a general separable Banach space
$B$. In order to have a powerful integration theory for $\cL (H,B)$-valued \emph{processes}, we need 
an additional geometric assumption on $B$. Of particular importance are the so-called \emph{UMD Banach spaces}.
For the definition of UMD spaces and more information, we refer to the survey article \cite{burk}. 
We here confine ourselves to note that
every Hilbert space is a UMD space as are the reflexive $L^p$ and Sobolev spaces. 

The importance of the UMD property for stochastic integration is that it allows for so-called decoupling, see 
\cite{garling, mcc}. Roughly speaking, this enables us to replace the cylindrical Wiener process $W_H$
by an independent copy $\tilde{W}_H$ and thus use the results of \cite{vNW05} pathwise. This program was carried
out in \cite{vNVW07} and yields a similar characterization of stochastic integrability as in \cite{vNW05}
in the case of processes which belong scalarly to $L^p(\Omega; L^2(0,T;H))$. We recall that $\Phi: [0,T]\times \Omega \to \cL (H, E))$ is said to 
\emph{belong to} $L^p(\Omega; L^2(0,T;H))$ \emph{scalarly}, if for every $x^* \in E^*$  the function
$t \mapsto \Phi^*(t, \omega)x^*$ belongs to $L^2(0,T;H)$ for almost every $\omega$ and the map $\omega \mapsto \Phi^*(\cdot, \omega)x^*$ belongs to $L^p(\Omega; L^2(0,T;H))$.

It is proved in \cite{vNVW07}
that an $H$-strongly measurable and adapted process $\Phi : [0,T]\times \Omega \to \cL (H,E)$ which
belongs to $L^p(\Omega; L^2(0,T;H))$ scalarly is stochastically integrable if and only if there is a 
random variable $\xi \in L^p(\Omega; E)$ such that \eqref{eq.xi} holds
for all $x^* \in E^*$. 
This in turn is the case if and only if $\Phi$ represents a random variable $R \in L^p(\Omega; 
\gamma (L^2(0,T;H),E))$. Here `represents' means that \eqref{eq.represent} holds for almost every $\omega$.

A characterization of stochastic integrability 
for processes $\Phi$ which belong scalarly to $L^0(\Omega; L^2(0,T;H))$ is also contained in \cite{vNVW07}, 
however, in this characterization one needs information about the whole integral process $\int_0^\cdot 
\Phi (s) dW_H(s)$; when dealing with weakly mild solutions, such information is not available, whence this 
characterization cannot be used for our purposes. 
Therefore, in the proposition below, we use a stopping time argument to reduce to the $L^p(\Omega)$-case.

\begin{prop}\label{p.umd}
Assume Hypotheses \ref{hyp1} and \ref{hyp2} and that $E$ is a UMD Banach space. Then the weak, the weakly mild and 
the mild solutions of \eqref{eq.sde} coincide. Furthermore, if $\bX$ is a weak solution, then for all $t \geq 0$ 
the function $s \mapsto S_{t-s}G(X_s)$ represents an element of the space $L^0(\Omega, \gamma (L^2(0,t;H),E))$.
\end{prop}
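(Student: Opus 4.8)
The plan is to show that every weakly mild solution of \eqref{eq.sde} is in fact a mild solution; the remaining inclusions come for free, since weak and weakly mild solutions coincide by Proposition~\ref{p.equivalence} and, by the very definition of stochastic integrability, every mild solution is weakly mild. So fix a weak (equivalently, weakly mild) solution $\bX$, fix $t \geq 0$, and set $\Phi(s) := S(t-s)G(X(s))$ for $s \in (0,t)$. By Corollary~\ref{c.extend}, equation \eqref{eq.mildsolution} holds for \emph{all} $x^* \in E^*$. Arguing as in the proof of Proposition~\ref{p.additivecase} --- using Hypothesis~\ref{hyp2} to majorize $\|S(t-s)\|_{\cL(\tilde{E},E)}$ by a square-integrable function $g$ on $(0,t)$, the local boundedness of $F$, continuity of the paths of $\bX$, and (since $E$ is separable) the Pettis measurability theorem --- the Bochner integral $\int_0^t S(t-s)F(X(s))\,ds$ defines an $E$-valued random variable, so that
\[ \xi := X(t) - S(t)X(0) - \int_0^t S(t-s)F(X(s))\,ds \]
is an $E$-valued random variable with $\dual{\xi}{x^*} = \int_0^t \Phi(s)^*x^*\,dW_H(s)$ for every $x^* \in E^*$; that is, \eqref{eq.xi} holds with this $\Phi$ and $T=t$. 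It therefore suffices to prove that $\Phi$ is stochastically integrable on $(0,t)$ with $\int_0^t \Phi(s)\,dW_H(s) = \xi$ and represents an element of $L^0(\Omega;\gamma(L^2(0,t;H),E))$: substituting into the definition of $\xi$ then yields \eqref{eq.evalued}, and $t\geq 0$ was arbitrary.

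The characterization of stochastic integrability in UMD spaces from \cite{vNVW07} recalled above applies to processes that belong \emph{scalarly} to $L^p(\Omega;L^2(0,t;H))$, whereas here $\Phi$ only belongs scalarly to $L^0$ and $\xi$ only lies in $L^0(\Omega;E)$; this gap is bridged by a localization argument. Put $\tau_n := \inf\{s \geq 0 : \|X(s)\| \geq n\}$ and $\Omega_n := \{\|X(0)\| < n\} \in \cF_0$, so that $\tau_n \uparrow \infty$ almost surely, $\Omega_n \uparrow \Omega$, and $\tau_n > 0$ on $\Omega_n$. The process $\Phi_n := \one_{\Omega_n}\one_{[0,\tau_n]}\Phi$ is $H$-strongly measurable and adapted, and by Hypothesis~\ref{hyp2} together with the boundedness of $F$ and $G$ on $\bar{B}(0,n)$ one has $\|\Phi_n(s)^*x^*\|_H \leq \one_{\Omega_n}\,g(t-s)\,B_n\,\|x^*\|$ with $B_n := \sup\{\|G(x)\|_{\cL(H,\tilde{E})} : \|x\|\leq n\}$, so $\Phi_n$ belongs scalarly to $L^\infty(\Omega;L^2(0,t;H))$. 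Moreover, by Corollary~\ref{c.stopped} (extended to $x^* \in E^*$ via Corollary~\ref{c.extend}) and since $0\wedge\tau_n = 0$, one has $\int_0^t \Phi_n(s)^*x^*\,dW_H(s) = \dual{\xi_n}{x^*}$ for all $x^* \in E^*$, where
\[ \xi_n := \one_{\Omega_n}\Big( X(t\wedge\tau_n) - S(t)X(0) - \int_0^t \one_{[0,\tau_n]}(s)S(t-s)F(X(s))\,ds \Big) ; \]
since $\|X(t\wedge\tau_n)\| \leq n$ by continuity of the paths, $\one_{\Omega_n}S(t)X(0)$ is bounded in $E$ (by Hypothesis~\ref{hyp2} for $t>0$, trivially for $t=0$), and the Bochner integral is bounded, we have $\xi_n \in L^\infty(\Omega;E)$. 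The UMD characterization of \cite{vNVW07} now yields that $\Phi_n$ is stochastically integrable on $(0,t)$ with $\int_0^t \Phi_n\,dW_H = \xi_n$ and represents a random variable $R_n \in L^\infty(\Omega;\gamma(L^2(0,t;H),E)) \subset L^0(\Omega;\gamma(L^2(0,t;H),E))$.

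It remains to undo the localization. The events $\Omega_n \cap \{\tau_n \geq t\}$ increase to $\Omega$ up to a null set, and on this set $\Phi_n = \Phi$ on $(0,t)$ and $\xi_n = \xi$; hence, by the standard consistency and localization properties of the stochastic integral (the approximating elementary processes and the continuous integral processes can be glued along this increasing family; cf.\ \cite{vNVW07}), $\Phi$ is stochastically integrable on $(0,t)$ with $\int_0^t \Phi\,dW_H = \xi$, and $\Phi$ represents an $R \in L^0(\Omega;\gamma(L^2(0,t;H),E))$ satisfying $R = R_n$ almost everywhere on $\Omega_n \cap \{\tau_n \geq t\}$. This gives \eqref{eq.evalued}, so $\bX$ is a mild solution; combined with the trivial implications, this proves that the weak, the weakly mild and the mild solutions coincide and establishes the representation claim. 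The main obstacle is precisely this passage through $L^0$: because the initial distribution is arbitrary and the coefficients are only locally bounded, neither the candidate integrand nor the candidate integral carries any a priori moment bound, so one must first localize --- in $\omega$ via $\Omega_n$ and along the paths via $\tau_n$ --- into the $L^\infty$, hence $L^p$, regime where the results of \cite{vNVW07} apply, and then verify carefully that the localized stochastic integrals together with their $\gamma$-radonifying representations are consistent and glue into a genuine $E$-valued stochastic integral with continuous paths.
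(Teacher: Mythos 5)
Your argument is essentially the paper's: you localize with the stopping times $\tau_n$, use Corollaries \ref{c.stopped} and \ref{c.extend} to identify the terminal value $\xi_n$ of the stopped scalar stochastic integrals, observe that the stopped integrand is scalarly in $L^\infty(\Omega;L^2(0,t;H))$, and invoke the UMD characterization of \cite{vNVW07} to obtain stochastic integrability of $\Phi_n$ together with a $\gamma$-representation $R_n$; your extra factor $\one_{\Omega_n}$ with $\Omega_n=\{\|X(0)\|<n\}$ is a sensible refinement which makes the $L^\infty$-bounds literally correct for arbitrary initial laws. The only place where you diverge is the de-localization, and this is precisely the step the paper treats with care. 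The paper does not ``glue the integrals'': it first glues the representations pointwise in $\omega$, using \cite[Lemma 2.7]{vNVW07} and the fact that by path continuity $\Phi_n(\cdot,\omega)=\Phi(\cdot,\omega)$ on $(0,t)$ for $n\geq n_0(\omega)$, thereby producing a strongly measurable $R\in L^0(\Omega;\gamma(L^2(0,t;H),E))$ represented by $\Phi$ --- which is the second assertion of the proposition --- and only then concludes stochastic integrability of $\Phi$ and the convergence $\int_0^t\Phi_n\,dW_H\to\int_0^t\Phi\,dW_H$ in $L^0(\Omega;E)$ from \cite[Theorems 5.9 and 5.5]{vNVW07}, finally letting $n\to\infty$ in the stopped identity. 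Your appeal to ``standard consistency and localization properties'' of the integral is exactly the crux: as the paper remarks just before the proposition, the $L^0$-characterization via a single terminal random variable is not available, so integrability of $\Phi$ cannot simply be read off from the identity $\dual{\xi}{x^*}=\int_0^t\Phi^*x^*\,dW_H$, and the gluing of the integrals along $\Omega_n\cap\{\tau_n\geq t\}$ is an assertion that itself needs the localization machinery of \cite{vNVW07}. It can be justified, but the clean route is the one just described; in particular your claim that $\Phi$ represents an element of $L^0(\Omega;\gamma(L^2(0,t;H),E))$ should be proved by the pointwise gluing of the $R_n$ rather than obtained as a by-product of the unproved gluing of integrals. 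With that step filled in along the paper's lines, your proof is complete and coincides with the published one.
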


\begin{proof}
Let $\bX$ be a weak (equivalently, a weakly mild) solution of \eqref{eq.sde}. If no weak solution exists, there 
is nothing to prove.

For $n \in \CN$ and define $\tau_n := \inf\{s>0\,:\, \norm{X_s} \geq n \}$. Fix $t >0$. 
Arguing similar as in the proof of Proposition \ref{p.additivecase}, we see that
\[ \xi_n := X_{t\wedge \tau_n} -(X_{\tau_n} - S_{t-t\wedge \tau_n}X_{\tau_n})\one_{\{\tau_n < \infty\}}- S_t X_{0\wedge \tau_n} - \int_0^t\one_{[0,\tau_n]}S_{t-s}F(X_s)\, ds\]
is a well-defined, bounded, $E$-valued random variable. 
It follows from Corollary \ref{c.extend}, that for $x^* \in E^*$,  
\[ \dual{\xi_n}{x^*} = \int_0^t \one_{[0,\tau_n]} G(X_s)^*S_{t-s}^*x^*\, dW_H(s) \,.\]
almost surely.
Since $\bX$ has continuous paths and $G$ is bounded on bounded subsets,
$\Phi_n : s \mapsto \one_{[0,\tau_n]}S_{t-s}G(X_s)$ belongs to $L^\infty (\Omega; L^2(0,t;H))$ scalarly. Hence,
by \cite[Theorem 5.9]{vNVW07}, $\Phi_n$ is stochastically integrable and 
\begin{equation}\label{eq.stoppedequation}
\begin{aligned}
X_{t\wedge \tau_n} & =  \,\, S_tX_{0\wedge \tau_n } + X_{\tau_n} - S_{t-t\wedge \tau_n}X_{\tau_n}\\
& \quad +
\int_0^{t\wedge \tau_n}  S_{t-s}F(X_s)\, ds
  + \int_0^t\one_{[0,\tau_n]}S_{t-s}G(X_s)\, dW_H(s)\,.
  \end{aligned}
\end{equation}
Furthermore, $\Phi_n$ represents an element of $L^p(\Omega; \gamma (L^2(0,t;H), E))$ for all $p\geq 1$.
Now let $N$ be a set with $\P(N) = 0$ such that for $\omega \not\in N$ the map $s \mapsto \Phi_n(s, \omega )$
represents an element $R_n(\omega )$ of $\gamma (L^2(0,t;H),E)$. Such a set exists by \cite[Lemma 2.7]{vNVW07}. 

Note that by the continuity of the paths, $\Phi_n (s, \omega ) = \Phi (s,\omega ):= S_{t-s}G(X(s, \omega ))$ 
for all $s \in (0,t)$ and $n\geq n_0=n_0(\omega )$. Thus, $\Phi (s, \omega )$ represents an element $R(\omega )$ of
$\gamma (L^2(0,t;H),E)$ for all $\omega \not \in N$. Since $R_n(\omega ) \to R(\omega )$ for all $\omega \not
\in N$, it follows that $R$ is a strongly measurable $\gamma (L^2(0,t;H),E)$-valued random variable. Furthermore,
$R$ is represented by $\Phi$. By \cite[Theorem 5.9]{vNVW07}, $\Phi$ is stochastically integrable and 
\cite[Theorem 5.5]{vNVW07} shows that
\[ \int_0^t\Phi_n(s)\, dW_H(s) \to \int_0^t \Phi (s)\, dW_H(s) \quad \mbox{in} \,\, L^0(\Omega;E)\,.\]
On the other hand, 
\[ \xi_n \to X(t) - S(t)X(0) - \int_0^t S(t-s)F(X(s))\, ds \]
pointwise a.e.\ and hence in $L^0(\Omega; E)$. Thus, letting $n \to \infty$ in \eqref{eq.stoppedequation}
finishes the proof.
\end{proof}

\section{Applications}\label{sect.applications}

We end this article by discussing some examples of stochastic partial differential equations where the results of this article can be applied.

\subsection{Equations with measurable semilinear term and additive noise}
In \cite{k13}, we are concerned with the following equation
\begin{equation}\label{eq.ou}
dX(t) = \big[AX(t) + F(X(t)\big] + GdW_H(t)
\end{equation}
where $E, \tilde{E}, H$ and $A$ are as in Hypothesis \ref{hyp1}, the semilinear term $F: E \to E$ is bounded and measurable, 
$W_H$ is an $H$-cylindrical Wiener process and $G \in \cL (H, \tilde{E})$. In the case where $F \equiv 0$, this is an Ornstein-Uhlenbeck equation, which is well understood. If the Ornstein-Uhlenbeck equation associated with \eqref{eq.ou}, i.e.\ equation $[A, 0, G]$ is well-posed, the associated transition semigroup
$\mathscr{T}_\mathsf{ou}$ is known explicitly. Namely,
\[
\mathscr{T}_\mathsf{ou}(t)f(x) = \int_E f(S(t)x +y)\, d\mathscr{N}_{Q_t}(y)
\]
where $\mathscr{N}_Q$ denotes the centered Gaussian measure with covariance operator $Q$ and $Q_t : E^* \to E$ is given as
\[
Q_tx^* := \int_0^t S(s)GG^*S(s)^*x^*\, ds.
\]
By $H_{Q_t}$, we denote the reproducing kernel Hilbert space associated with $Q_t$. In \cite{k13}, the following theorem is proved.
\begin{thm}\label{t.ou}
Let $E, \tilde{E}, H$ and $A$ as in Hypothesis \ref{hyp1}, $G \in \cL (H, \tilde{E})$  and assume that also Hypothesis \ref{hyp2} is satisfied. Moreover, assume that the Ornstein-Uhlenbeck equation $[A, 0, G]$ is well-posed 
and that $S(t)E \subset H_{Q_t}$ for all $t> 0$ with 
\begin{equation}\label{eq.condition}
\int_0^T\|S(t)\|_{\cL (E, H_{Q_t})} \, dt < \infty
\end{equation}
for all $T> 0$. Then for every bounded, measurable $F: E \to E$ equation \eqref{eq.ou} is well-posed. The solutions are strong Markov processes
with a strong Feller transition semigroup.
\end{thm}

This extends earlier results from \cite{cmg95, gg94, gg94a} where the corresponding equation was studied for bounded 
and continuous (resp.\ bounded and weakly continuous) $F$ under similar assumptions in the case where $E=\tilde{E}$ is a Hilbert space. 
The assertion that \eqref{eq.ou} is well-posed even for bounded \emph{measurable} $F$ appears to be new even in the case of Hilbert spaces since existence of solutions cannot be inferred from the Girsanov theorem, as $G$ is, in general, not invertible.

The assumption that \eqref{eq.condition} holds 
implies that the transition semigroup $\mathscr{T}_\mathsf{ou}$ is strongly Feller and is satisfied in many important examples, for example 
for the one-dimensional stochastic heat equation driven by space-time white noise, i.e.\ $A$ is the $L^p$-realization of the Dirichlet Laplacian on the interval $(0,1)$ and for $p\leq 2$ we set the operator $G$ is the injection from $L^2(0,1)$ to $L^p(0,1)$. In the case $p>2$ we set $\tilde{E} = L^2(0,1)$ and $G$ the identity. It is also possible to consider the stochastic heat equation on $C_0(0,1)$. More examples, which include equations in higher space 
dimension, more general differential operators and different noise terms are discussed in \cite{k13}.
\medskip

The proof of Theorem \ref{t.ou} is based on Theorem \ref{t.weakmart}, and we prove existence and uniqueness of solutions of the associated 
local martingale problem. The actual proof of existence and uniqueness is then given using semigroup theory.
In view of Theorem \ref{t.wellposed}, the strong Markov property for solutions follows 
automatically once we have established well-posedness of $[A, F, G]$. 

The first step to prove uniqueness for solutions of \eqref{eq.ou} is to prove a Miyadera-Voigt type perturbation result for strongly Feller semigroups. For the generator $\mathscr{A}_\mathsf{ou}$ of the Ornstein-Uhlenbeck semigroup $\mathscr{T}_\mathsf{ou}$, this result can be used to show that $\mathscr{A}_\mathsf{pert}$, defined 
by $\mathscr{A}_\mathsf{pert}u(x) := \mathscr{A}_\mathsf{ou}u(x) +
\langle F(x), \nabla u(x)\rangle$, generates a strongly Feller semigroup $\mathscr{T}_\mathsf{pert}$. A detailed analysis of the 
operator $\mathscr{A}_\mathsf{pert}$ shows that a probability measure $\PP$ on $C([0,\infty); E)$ solves the local martingale problem 
associated with equation $[A,F,G]$ if and only if it solves the true martingale problem (in the sense of \cite{ek}) for the operator $\mathscr{A}_\mathsf{pert}$. Thus a well-known result \cite[Theorem 4.4.1]{ek} yields that the one-dimensional distributions 
of a solution $\PP$ of the martingale problem for $\mathscr{A}_\mathsf{pert}$ are determined by the distribution of $\bx (0)$ under $\PP$ 
and the semigroup $\mathscr{T}_\mathsf{pert}$. By Theorem \ref{t.markov}, this implies uniqueness in law for the solutions 
of equation \eqref{eq.ou}. Moreover, if solutions exist, then the associated transition semigroup is $\mathscr{T}_\mathsf{pert}$, which is 
strongly Feller.

It thus remains to prove existence of solutions. If $F$ is additionally Lipschitz continuous, then solutions can be constructed using Banach's fixed point 
theorem in a standard way. Thus, for bounded, Lipschitz continuous $F$, equation \eqref{eq.ou} is well-posed.
To extend the existence result to general bounded, measurable $F$, a refinement of Lemma \ref{l.approx} is used. 
Indeed, making use of the strong Feller property, it can be proved that if $F_n$ is a sequence of bounded measurable functions such that 
equation $[A, F_n, G]$ is well-posed for every $n$ and the sequence $F_n$ is uniformly bounded and converges pointwise to the bounded function 
$F$, then also equation $[A, F, G]$ is well-posed. The tightness of the solutions to the local martingale problem for $[A, F_n, G]$ can be proved 
using that these measures are distributions of mild solutions of the equation.
Using the approximation result, well-posedness of \eqref{eq.ou} can be extended from bounded, Lipschitz 
continuous $F$ to bounded, measurable $F$ via a monotone class argument.

\subsection{Stochastic reaction-diffusion systems with H\"older continuous multiplicative noise}

Reaction-diffusion systems and stochastic perturbations of them play an important role in applications in chemistry, biology and physics \cite{murray}. In an 
abstract form, a stochastic reaction-diffusion system takes the form \eqref{eq.sde}, where the state space $E$ is a Banach space of $\CR^r$-valued 
functions, defined on a domain $\OO\subset \CR^d$. Typically, the reaction term $F$ is a vector of composition operators with polynomial entries.

Such systems with locally Lipschitz continuous multiplicative noise where studied in \cite{Cerrai}. In the case where the noise term $G$ is merely 
H\"older continuous, only partial results are available and, to the best of our knowledge, only for $r=1$, i.e.\ a single reaction-diffusion equation rather 
than a system. In \cite{bg99}, existence of solutions for such an equation was proved under an additional boundedness assumption on $G$. However, 
a uniqueness result is missing, except for the case of locally Lipschitz continuous $G$.\smallskip

In \cite{k12}, we prove pathwise uniqueness and strong existence of solutions for a class of stochastic reaction-diffusion equations with 
H\"older continuous multiplicative noise. Let us here present an example which fits into the framework of \cite{k12} and explain how results 
of this article are used in the proof of existence and uniqueness.

Let $\OO \subset \CR^d$ be an open domain with Lipschitz boundary.
Moreover, we let $a_1 = (a_{ij}^{(1)}), a_2=(a_{ij}^{(2)}) \in L^\infty(\OO; \CR^{d\times d})$ be symmetric and uniformly elliptic, i.e.\ there exists $\eta> 0$ such that 
for all $\xi \in \CR^d$ and almost all $x \in \OO$ we have
\[
\sum_{i,j=1}^da_{ij}^{(l)}(x) \xi_i\xi_j \geq \eta |\xi|^2
\]
for $l=1,2$. Let $R_1, R_2$ be Hilbert-Schmidt operators on $L^2(\OO)$ such that 
$R_j$ is diagonalized by an orthonormal basis $(e_n^{(j)})_{n\in \CN}$ of $L^2(\OO)$ which consists of functions in $C(\overline{\OO})$ 
and satisfies $\sum_{n=1}^\infty \|R_je_n^{(j)}\|_\infty^2 < \infty$ for $j=1,2$. Finally, we let $g_1, g_2: \CR \to \CR$ be of linear growth and locally $\half$-H\"older continuous. We consider the following stochastic reaction-diffusion system
\begin{equation}\label{eq.rds}
\left\{
\begin{array}{lll}
du_1(t) & = & \big[\div (a_1\nabla u_1(t)) +u_1(t) - u_1(t)^3 + u_2(t)\big]dt + g_1(u_1(t))R_1dW_1(t)\\
du_2(t) & = & \big[\div (a_2\nabla u_2(t)) +u_1(t) - u_2(t)\big]dt + g_2(u_2(t))R_2dW_2(t)
\end{array}
\right.
\end{equation}
complemented with conormal boundary conditions. 

To reformulate the above system in our abstract framework, we set $\tilde E =E= C(\overline{\OO})\times C(\overline{\OO})$ and $A = \mathrm{diag}(A_1, A_2)$,
where $A_j$ is the $C(\overline{\OO})$-realization of the differential operator $\div (a_j \nabla \cdot)$ under conormal boundary conditions. 
We set $H = L^2(\OO)\times L^2(\OO)$. By the assumption on $R_j$, for $h \in L^2(\OO)$ we find that $R_j h \in C(\overline{\OO})$. We may
thus define $G: E \to \cL (H,E)$ by 
\[ [G(u,v)h](x) := ( g_1(u(x))R_1h_1(x), g_2(u(x))R_2h_2(x))\]
  for $h_1,h_2 \in L^2(\OO)$ and $x \in \overline{\OO}$. 
The reaction term $F$ is given by $[F(u,v)](x) := (u(x)-u(x)^3+v(x), u(x) -v(x))$. This reaction 
Term is of Fitzhugh-Nagumo type and equations with this reaction term are generic excitable systems \cite{murray}.

In \cite{k12} we prove

\begin{thm}\label{t.rds}
Under the assumptions above, equation \eqref{eq.rds} is well-posed on the state space $E = C(\overline{\OO})\times C(\overline{\OO})$.
The solutions exist strongly, they are pathwise unique and strong Markov processes.
\end{thm}

The proof of Theorem \ref{t.rds} is in spirit rather different from the proof of well-posedness of \eqref{eq.ou}, insofar as we work directly with 
solutions of the equation, rather than with solutions of the associated local martingale problem.  In the proof, we use the equivalence of weak and 
mild solutions. Indeed, in the proof of pathwise uniqueness, we use weak solutions, whereas in the proof of existence of solutions, we use 
mild solutions. We also employ the Yamada-Watanabe theory from  
Section \ref{sect.yw}.

The proof of pathwise uniqueness is an adaption of the proof of \cite[Theorem 1]{yw1}. The main difficulty in extending the proof from the finite-dimensional setting to an infinite dimensional setting is to handle the differential operators involved in \eqref{eq.rds}. In \cite{k12}, we use 
the concept of a weak solution and test solutions against functionals $x^* = (\lambda R(\lambda, A_1)^*\delta_x, 0)$, resp.\ 
$x^* = (0, \lambda R(\lambda, A_2)^*\delta_x)$, where $A_j$ are the realizations of of the differential operator $\div (a_j\nabla \cdot)$ on 
$C(\overline{\OO})$. This approach should be compared with \cite{mps06}, where pathwise uniqueness was proved for stochastic heat equations 
on $\OO = \CR^d$, namely
\[
du(t) = \Delta u(t) + \sigma (u(t))dW(t),
\]
where $\Delta$ is the Laplacian on $\CR^d$, $W$ is a colored noise and $\sigma : \CR \to \CR$ is $\gamma$-H\"older continuous, where the 
allowed value of $\gamma$ depends on the noise $W$. To prove pathwise uniqueness in \cite{mps06}, the authors convolute solutions of 
the stochastic heat equation with a mollifier $\varphi_n$. In their variational framework, this yields the term $u\ast \Delta\varphi_n$ in the equation 
for the resulting process. It is then used that, as a consequence of its translation invariance, the Laplacian commutes with convolutions, i.e.\ 
we have $u\ast (\Delta \varphi_n) = \Delta (u\ast\varphi_n)$. This is no longer true for differential operators with nonconstant coefficients
as in \eqref{eq.rds}. 

Let us also note that a recent result \cite{mmp12} for the stochastic heat equation that in the case of $d=1$ shows that we cannot hope for pathwise uniqueness in the case of space-time 
white noise.\smallskip

Note that by Theorem \ref{t.yw}, pathwise uniqueness implies uniqueness in law, hence the strong Markov property of solutions follows 
from Theorem \ref{t.markov} once we have established existence of solutions.
To that end, we approximate the function $f$ in the reaction term and the functions $g_1, g_2$ with bounded functions by cutting 
off the functions.
Existence of solutions for the approximate problems with bounded coefficients and deterministic initial values 
follows from the results of \cite{bg99}. We could then 
use Lemma \ref{l.approx} to infer existence of solutions for the limit problem \eqref{eq.rds}. However, in \cite{k12} we choose a different approach
and use that, as a consequence of pathwise uniqueness and Corollary \ref{c.strongexistence}, the approximate solutions can be realized on a common 
stochastic basis and with respect to a common $H$-cylindrical Wiener process. This allows us to adopt the strategy from \cite{Cerrai, KvN11} to prove 
existence of solutions. Indeed, as the approximate solutions exist on a common stochastic basis and are pathwise unique, they can be `glued together' 
to a `maximal solution' of equation \eqref{eq.rds}. To prove existence of solutions in the sense used here, we have to prove that the `maximal solution' 
exists globally. By the results of \cite{KvN11}, to that end, we have to prove uniform boundedness of the approximate solutions in $L^p(\Omega; C([0,T]; E))$ for a suitable $p>1$, all $T>0$ and $p$-integrable initial data. As the approximate solutions are also \emph{mild} solutions, the uniform boundedness can be proved using estimates for deterministic and stochastic convolutions, see \cite{vNVW08}.

We note that, in comparison with \cite{bg99}, in Theorem \ref{t.rds} we do not need that the term $G$ is bounded. Moreover, with the above arguments, 
we initially prove existence of solutions only for initial data with a certain integrability, thus in particular for deterministic initial data. However, 
by Theorem \ref{t.markov}, we automatically obtain existence of solutions for all initial distributions.

\subsection*{Acknowledgment} I would like to thank Jan van Neerven for several helpful discussions and also for reading an earlier version of this article.
I am also grateful to the anonymous referees for the critical comments, which helped improve this article.

\providecommand{\bysame}{\leavevmode\hbox to3em{\hrulefill}\thinspace}
\providecommand{\MR}{\relax\ifhmode\unskip\space\fi MR }
\providecommand{\MRhref}[2]{%
  \href{http://www.ams.org/mathscinet-getitem?mr=#1}{#2}
}
\providecommand{\href}[2]{#2}

\end{document}